\newtheorem{theorem}{Theorem}[section]
\newtheorem{proposition}[theorem]{Proposition}
\newtheorem{definition}[theorem]{Definition}
\newtheorem{lemma}[theorem]{Lemma}
\newtheorem{example}[theorem]{Example}
\newenvironment{proof}[1]{\textbf{#1} }{} %{\ \rule{0.5em}{0.5em}\newline}
\newcommand{\Halmos}{$\Box$}
\newcommand{\leqc}{\leq_c}
\newcommand{\leqst}{\leq_{st}}
\newcommand{\Fbar}{\overline{F}}
\newcommand{\DOR}{{\rm DOR}}
\newcommand{\NWU}{{\rm NWU}}
\newcommand{\ITS}{{\rm InvSub}}
\DeclareMathOperator\Log{Log}
\DeclareMathOperator\Prob{P}
\DeclareMathOperator\Mexp{E}
\DeclareMathOperator\Var{Var}
\title{Convex combinations of random variables stochastically dominate the parent for a new class of heavy tailed distributions}
\date{}
\author{Idir Arab}
\affil{CMUC, Dep. Mathematics, Univ. Coimbra, Portugal; idir.bhh@gmail.com}
\author{Tommaso Lando}
\affil{Department of Economics, University of Bergamo, Italy; tommaso.lando@unibg.it}
\author{Paulo Eduardo Oliveira}
\affil{CMUC, Dep. Mathematics, Univ. Coimbra, Portugal; paulo@mat.uc.pt}
\begin{document}
%%%%%%%%%%%%%%%%

\maketitle

\begin{abstract}
Stochastic dominance of a random variable by a convex combination of its independent copies has recently been shown to hold within the relatively narrow class of distributions with concave odds function, and later extended to broader families of distributions. A simple consequence of this surprising result is that the sample mean can be stochastically larger than the underlying random variable. We show that a key property for this stochastic dominance result to hold is the subadditivity of the cumulative distribution function of the reciprocal of the random variable of interest, referred to as the inverted distribution.
By studying relations and inclusions between the different classes for which the stochastic dominance was proved to hold, we show that our new class can significantly enlarge the applicability of the result, providing a relatively mild sufficient condition.

\smallskip

\noindent
\textbf{Keywords}: {Stochastic order, Inverted distribution,  Subadditivity, Odds function, Convex transform order}

\noindent
\textbf{AMS[2020] Classification}: {Primary 60E15; Secondary 91G70, 62P05}
\end{abstract}

%\HISTORY{Received: Month DD, YYYY; Accepted: Month DD, YYYY; Published Online: Month DD, YYYY}

%%%%%%%%%%%%%%%%%%%%%%%%%%%%%%%%%%%%%%%%%%%%%%%%%%%%%%%%%%%%%%%%%%%%%%

% Text of your paper here

\section{Introduction}\label{sec:intro}
Stochastic dominance is a widely-used tool in probability, which expresses some notion of one random variable being larger than another from a distributional point of view (see \cite{shaked2007}). The applications of this concept are numerous within different fields, such as statistics, economics, and finance, as is easily seen by the enormous references in the literature dealing with these concepts.
Although the topic has been studied extensively, recent results, discussed below, have outlined some ``surprising'' behaviours of stochastic dominance, especially when we consider sums of random variables.
While it may be intuitive, in a non-random setting, that summing the same quantity on both sides of some expression should not affect inequalities, and that a convex combination of points belongs to the convex hull, these basic principles are not generally true when random elements are involved.
For example, \cite{pomatto} have shown that, under some conditions, the ordering between a pair of random variables can be obtained by summing an independent ``noise'' to both, while \cite{superPareto2024} proved that, within a given family of probability distributions, a random variable can be dominated by convex combinations of independent copies from it. In both cases, the results are related to the variability, or the tail-heaviness, of the random variables involved.

In this paper, we show that the dominance result recently obtained by \cite{superPareto2024}, and then extended by \cite{ChenShneer2024} and \cite{Muller2024} to larger classes of distributions, can also hold under different, and in some cases broader, conditions.
To be more specific, we search for conditions under which, given $n$ i.i.d. copies of $X$, say $X_1,\ldots,X_n$, and weights $\theta_1,\ldots,\theta_n\geq0$ such that $\theta_1+\cdots+\theta_n=1$, we have
\begin{equation}
\label{eq:main-st}
X\leqst \theta_1 X_1+\cdots+\theta_n X_n.
\end{equation}
where $\leqst$ represents the standard stochastic dominance (see Definition~\ref{def:order1} below to recall the formal definition).
The implications of this result in terms of decision making under uncertainty, with meaningful applications in insurance and economic models, are quite remarkable, as it has been already explained by \cite{superPareto2024}, and complemented by applications to economical and management problems in \cite{ChenShneer2024}. Using the same terminology as in \cite{superPareto2024}, the relation in (\ref{eq:main-st}) represents an ``unexpected'' stochastic dominance result. Indeed, it is maybe intuitive to think that a convex combination is somewhere in between its components, which is actually the case for random variables with finite mean.
In particular, if $\Mexp X<\infty$, (\ref{eq:main-st}) holds trivially, with equality in distribution, if and only if exactly one coefficient is strictly positive. Differently, if $\Mexp X<\infty$ and at least two coefficients are strictly positive, (\ref{eq:main-st}) does not hold, as follows from the fact that the random variables $X$ and $\theta_1X_1+\cdots+\theta_nX_n$ have the same expectation and they are comparable in terms of variability, where $X$ is more variable that {$\theta_1X_1+\cdots+\theta_nX_n$} in terms of the convex order \citep{shaked2007}, as we will discuss later.
\cite{superPareto2024} proved that (\ref{eq:main-st}) holds if $X$ is an increasing convex transformation of a Pareto random variable with shape parameter 1, that these authors call a \textit{super-Pareto} random variable. This property, as described later, is equivalent to the concavity of the odds function (the class of distributions defined through shape properties of the odds functions has recently been studied in \cite{lando2023nonparametric}). However, the assumptions in \cite{superPareto2024} define a relatively narrow class of distributions, ruling out many important models for which stochastic dominance is still verified. Furthermore, the super-Pareto assumption implies that $X$ is absolutely continuous, while it is reasonable to expect that (\ref{eq:main-st}) may hold even for some discrete models, as it can be seen in some special cases.
The above examples justify the interest in finding weaker conditions for (\ref{eq:main-st}).
As a consequence, \cite{ChenShneer2024} introduced a new family of distributions, called \textit{super-heavy-tailed}, for which (\ref{eq:main-st}) is verified. This class, that we show to be closely related to the well-known \NWU\ family (see \cite{shaked2007}, for example), includes the super-Pareto, under some conditions about the convex transformation. Even more recently, and using different arguments, \cite{Muller2024} shows that (\ref{eq:main-st}) holds for convex transformations of Fr\'{e}chet and Cauchy variables, and denotes such classes as \textit{super-Fr\'{e}chet} and \textit{super-Cauchy}, respectively. In particular, he shows that the super-Cauchy class includes the super-Fr\'{e}chet, which includes the super-Pareto. However, the super-heavy-tailed family does not include the super-Cauchy, and vice-versa. We remark, moreover, that the super-Cauchy property is still not compatible with discrete models.

In our main result, we show that (\ref{eq:main-st}) holds under the subadditivity of the inverted distribution, that is, the cumulative distribution function of the reciprocal of $X$ (in the continuous case). This class, denoted as \ITS, includes the super-heavy-tailed class of \cite{ChenShneer2024}, and, under some conditions on the convex transformation, all the other classes discussed, except for the super-Cauchy family. On the other hand, we show, by examples, that the super-Cauchy class does not include our \ITS\ family. Finally, we note that, at the moment, the \ITS\ class seems the only one including some discrete distributions.

\section{Preliminaries and a new class of distributions}
\label{sec:main}
Let us start by introducing a few general concepts and terminology to be used in the sequel. Throughout this paper, ``increasing'' and ``decreasing'' are taken as ``non-decreasing'' and ``non-increasing'', respectively, and the generalised inverse of an increasing function $v$ is denoted as $v^{-1}(u)=\sup\{x\in\mathbb{R}:\,v(x)\leq u\}$. Moreover, a function $v$ is said to be subadditive if $v(x+y)\leq v(x)+v(y)$, for every $x,y$ in the domain of $v$. The function $v$ is called superadditive if the inequality is reversed. Finally, a function $v$ defined in $[0,+\infty)$ is said to be star-shaped if $v(0)=0$ and $\frac{v(x)}x$ is increasing. If $\frac{v(x)}x$ is decreasing, $v$ is called anti-star-shaped.

Given a random variable $X$, we shall represent by $F_X$ and $\Fbar_X=1-F_X$ its cumulative distribution and survival functions, possibly using other subscripts if different such objects are under consideration. Moreover, as we will be dealing with some discrete distributions, we represent by $F_X^-(x)=\Prob(X<x)$ the left-continuous cumulative distribution function of the random variable. In general, we shall not be assuming the existence of densities. We shall also be referring to the odds function $\Lambda_X(x)=\frac{F_X(x)}{\Fbar_X(x)}$, again possibly with different subscripts.
We recall the definition of stochastic dominance.
\begin{definition}
\label{def:order1}
Given two random variables $X$ and $Y$, we say that $Y$ stochastically dominates $X$, denoted as $X\leqst Y$, if $\Fbar_X(x)\leq\Fbar_Y(x)$, for every $x\in\mathbb{R}$.
\end{definition}
Note that we shall refer to the random variables or to their cumulative distribution functions, with the same notations, as is more convenient. In fact, the stochastic orders and the characterisations we will be discussing depend only on the cumulative distribution functions.

Bearing in mind that $F_{\frac1X}^-(x)=1-F_X(\frac1x)$ is generally referred to as the \textit{inverted distribution} of $X$, as this is the cumulative distribution function of $\frac1X$ in the continuous case, we introduce a new class of distributions that will be central to our main result.
\begin{definition}
\label{def-ITS-sub}
We say that a random variable $X$ such that $F_X(0)=0$ is \ITS\ (for ``inverted subadditive'') if $F_{\frac1X}^-$ is subadditive.
\end{definition}

We first present a simple characterisation of this class.
\begin{lemma}
\label{lem:ITS-carac}
A random variable $X$ is \ITS\ if and only if
\begin{equation}
\label{eq:sub-1x}
F_X(\tfrac{x}{\theta})+F_X(\tfrac{x}{1-\theta})\leq F_X(x)+1,\quad \forall x\geq0,\, \theta\in(0,1).
\end{equation}
\end{lemma}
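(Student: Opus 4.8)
The plan is to translate the definition of \ITS\ directly into a statement about the inverted distribution $F_{\frac1X}^-$ and then simplify. By definition, $X$ is \ITS\ precisely when $v(x):=F_{\frac1X}^-(x)$ is subadditive, meaning $v(s+t)\leq v(s)+v(t)$ for all $s,t\geq0$. The first step is to substitute the explicit formula $F_{\frac1X}^-(x)=1-F_X(\tfrac1x)$ given in the excerpt, which holds for $x>0$, and to handle the boundary value at $x=0$ separately (note that since $F_X(0)=0$, one expects $F_{\frac1X}^-(0)=1-F_X(+\infty)=0$, so the point $0$ causes no trouble in the subadditivity inequality). Thus for $s,t>0$ subadditivity reads
\begin{equation}
\label{eq:plan-sub}
1-F_X\!\bigl(\tfrac{1}{s+t}\bigr)\leq \bigl(1-F_X(\tfrac1s)\bigr)+\bigl(1-F_X(\tfrac1t)\bigr).
\end{equation}

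The second step is a change of variables to recast \eqref{eq:plan-sub} in the form claimed in \eqref{eq:sub-1x}. I would set $x=\tfrac{1}{s+t}$, so that $s+t=\tfrac1x$, and then write $s=\tfrac{\theta}{x}$ and $t=\tfrac{1-\theta}{x}$ for a parameter $\theta\in(0,1)$; this parametrises all pairs $(s,t)$ of positive reals with the prescribed sum, since $\tfrac\theta x+\tfrac{1-\theta}x=\tfrac1x=s+t$. Under this substitution, $\tfrac1s=\tfrac{x}{\theta}$ and $\tfrac1t=\tfrac{x}{1-\theta}$, and plugging into \eqref{eq:plan-sub} turns it into
\begin{equation}
\label{eq:plan-sub2}
1-F_X(x)\leq \bigl(1-F_X(\tfrac{x}{\theta})\bigr)+\bigl(1-F_X(\tfrac{x}{1-\theta})\bigr).
\end{equation}
Rearranging \eqref{eq:plan-sub2} by collecting the constants and the $F_X$ terms gives exactly $F_X(\tfrac{x}{\theta})+F_X(\tfrac{x}{1-\theta})\leq F_X(x)+1$, which is \eqref{eq:sub-1x}. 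The equivalence is then just the observation that the substitution $(s,t)\leftrightarrow(x,\theta)$ is a bijection between $\{(s,t):s,t>0\}$ and $\{(x,\theta):x>0,\ \theta\in(0,1)\}$, so the two families of inequalities are identical.

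The main thing to be careful about, rather than a genuine obstacle, is the treatment of the degenerate cases and the domain of the inequality. I would check the boundary value $x=0$ in \eqref{eq:sub-1x} (where it reduces to $2F_X(0)\leq F_X(0)+1$, i.e. $0\leq1$ given $F_X(0)=0$, hence trivially true and consistent with the subadditivity at the origin) and confirm that the endpoints $\theta\to0^+$ and $\theta\to1^-$ correspond to the degenerate splits $s=0$ or $t=0$ of subadditivity, which hold automatically. One should also verify that monotonicity of $F_X$ guarantees everything is well defined even where $F_X$ may have jumps, so that no continuity assumption on $X$ is secretly used; since the argument only manipulates the values of $F_X$ pointwise through an invertible change of variables, no such assumption is needed. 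This makes the proof a short and essentially bookkeeping-level verification, with the only real content being the correct parametrisation of the pair $(s,t)$ by $(x,\theta)$.
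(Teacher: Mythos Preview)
Your proposal is correct and follows essentially the same route as the paper's own proof: both simply unwind the subadditivity of $1-F_X(\tfrac1x)$ via the change of variables $s=\tfrac{\theta}{x}$, $t=\tfrac{1-\theta}{x}$ and rearrange. The paper compresses this into two lines without discussing the boundary cases $x=0$ or $\theta\in\{0,1\}$, while you spell these out explicitly; the substance is identical.
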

\begin{proof}{Proof.}
The subadditivity of $1-F_X(\frac1x)$ is obviously equivalent to $1-F_X(\tfrac{x}{\theta})+1-F_X(\tfrac{x}{1-\theta})\geq 1-F_X(x)$, for every $x\geq0$ and $\theta\in(0,1)$, which is clearly a rewriting of (\ref{eq:sub-1x}). \Halmos
\end{proof}

\begin{example}
\label{rem:frechet}
A simple example of a class of distributions that are \ITS\ is obtained by considering random variables $X$ with Fr\'{e}chet distribution with shape parameter 1, that is, defined by the cumulative distribution function $\mathcal{H}(x)=e^{-1/x}$, for $x>0$.
In fact, given $\theta\in[0,1]$, $1+\mathcal{H}(x)-\mathcal{H}(\tfrac{x}{\theta})-\mathcal{H}(\tfrac{x}{1-\theta})=(1-e^{-\theta/x})(1-e^{-(1-\theta)/x})\geq 0$, so $\mathcal{H}$ satisfies (\ref{eq:sub-1x}).
Moreover, note that it is easily seen that $\Mexp X$ is infinite and the odds function is $\Lambda_{\mathcal{H}}(x)=\frac{e^{-1/x}}{1-e^{-1/x}}$, which is convex.
\end{example}

It is well-known that, for nonnegative functions, concavity implies subadditivity. Hence, when densities exist, a rather simple sufficient condition is available.
\begin{proposition}
\label{prop:ITS-dens}
Assume the nonnegative random variable $X$ has density $f_X$ such that its hazard rate $r_X(x)=\frac{f_X(x)}{\Fbar_X(x)}\leq\frac1x$, for $x>0$. Then $X$ is \ITS.
\end{proposition}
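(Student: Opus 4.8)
The plan is to show that the inverted distribution $F_{\frac1X}^-(x)=\Fbar_X(\frac1x)$ is subadditive by proving the shape property that actually underlies the hazard bound: namely that $F_{\frac1X}^-$ is anti-star-shaped. As noted just before the statement, concavity of a nonnegative function forces subadditivity; anti-star-shapedness is the weaker property that does the same job and is precisely what $r_X(x)\le\frac1x$ yields. Throughout write $g(x)=F_{\frac1X}^-(x)=\Fbar_X(\frac1x)$ for $x\ge0$, and note that $g\ge0$ with $g(0)=\lim_{x\to0^+}\Fbar_X(\frac1x)=\Fbar_X(+\infty)=0$.

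First I would recast the hypothesis as a monotonicity statement. Since $X$ has a density, $\Fbar_X$ is absolutely continuous with $\Fbar_X'=-f_X$, whence
\begin{equation*}
\frac{d}{dx}\bigl(x\,\Fbar_X(x)\bigr)=\Fbar_X(x)-x\,f_X(x)=\Fbar_X(x)\bigl(1-x\,r_X(x)\bigr).
\end{equation*}
Because $\Fbar_X\ge0$, this derivative is nonnegative exactly when $r_X(x)\le\frac1x$. Hence the assumption is equivalent to saying that $x\mapsto x\,\Fbar_X(x)$ is increasing on $(0,\infty)$.

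Next I would transfer this monotonicity to $g$ through the reciprocal. For $x>0$ one has $\frac{g(x)}{x}=\frac1x\,\Fbar_X(\frac1x)$, which equals $u\,\Fbar_X(u)$ evaluated at $u=\frac1x$. Since $u\mapsto u\,\Fbar_X(u)$ is increasing while $x\mapsto\frac1x$ is decreasing, their composition $x\mapsto\frac{g(x)}{x}$ is decreasing; that is, $g$ is anti-star-shaped. To conclude, for $x,y>0$ set $s=x+y$: monotonicity of $g(\cdot)/(\cdot)$ together with $x,y\le s$ gives $g(x)\ge\frac{x}{s}g(s)$ and $g(y)\ge\frac{y}{s}g(s)$, and summing yields $g(x)+g(y)\ge g(x+y)$; the cases $x=0$ or $y=0$ follow at once from $g(0)=0$ and $g\ge0$. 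Thus $F_{\frac1X}^-$ is subadditive and $X$ is \ITS.

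The main obstacle is conceptual rather than computational: one must resist proving the stronger claim that $F_{\frac1X}^-$ is concave, which would additionally require controlling $f_X'$ and is not implied by $r_X(x)\le\frac1x$ alone. The correct observation is that the hazard bound is exactly equivalent to $x\,\Fbar_X(x)$ being increasing, and hence to $g$ being merely anti-star-shaped, which already suffices for subadditivity. The only technical care needed is tracking the reversal of monotonicity induced by $x\mapsto\frac1x$ and the behaviour at the endpoints of the support, where $\Fbar_X$ vanishes and $x\,\Fbar_X(x)$ remains weakly increasing.
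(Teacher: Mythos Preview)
Your proof is correct and follows essentially the same route as the paper: both show that $g(x)=\Fbar_X(\tfrac1x)$ is anti-star-shaped (equivalently, $\tfrac{g(x)}{x}=u\Fbar_X(u)\big|_{u=1/x}$ is decreasing) using the hazard bound, and then invoke that anti-star-shaped functions vanishing at the origin are subadditive. The only cosmetic difference is that the paper differentiates $\tfrac1x\Fbar_X(\tfrac1x)$ directly, while you differentiate $x\Fbar_X(x)$ and then compose with the decreasing map $x\mapsto\tfrac1x$; you also spell out the anti-star-shaped $\Rightarrow$ subadditive step explicitly.
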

\begin{proof}{Proof.}
Note that the derivative of $\frac{1}{x}(1-F_X(\frac1x))$ has the same sign as $\frac{1}{x}f_X(\frac1x)+F_X(\frac1x)-1$, which, according to the assumption, is easily seen to be negative. It follows that $1-F_X(\frac1x)$ is anti-star-shaped, as it vanishes at zero, therefore, it is subadditive. \Halmos
\end{proof}

This very simple characterisation allows to show that a second family of distributions is \ITS.
\begin{example}
\label{ex:oddsFbar}
Consider nonnegative random variables $Y_b$, where $b\geq 0$, with survival function $\Fbar_b(x)=\frac{1}{1+x^b\Log(x+1)}$, for $x>0$. It is easily seen that these distributions satisfy the monotonicity assumption of Proposition~\ref{prop:ITS-dens} for $b\in(0,1)$ sufficiently small ($b\leq0.7$, although this is not an optimal bound). Therefore $Y_b$, for these suitably small values of $b$ are \ITS. Moreover, note that the odds function is $\Lambda_b(x)=x^b\Log(x+1)$, that can be checked to not be concave nor convex.
\end{example}

The existence of a density is not necessary. Indeed, the \ITS\ condition is also compatible with discrete models, as we illustrate in the next example.
\begin{example}
\label{ex:discITS}
Let $F_X(x)=(1-p)^{\lceil\frac1x\rceil}$, be defined in the completed half line $(0,+\infty]$, where $p\in(0,1)$ and $\lceil\cdot\rceil$ denotes the ceiling function. This cumulative distribution function is a right-continuous step function, with jumps at points $\frac 1k$, $k=1,2,3,\ldots,+\infty$, and in particular, it assigns positive mass $p$ to $+\infty$. This clearly implies that $\Mexp X=+\infty$. Now, $F_{\frac1X}^-(x)=1-F_X(\frac1x)=1-(1-p)^{\lceil x\rceil}$ is the left-continuous version of the geometric cumulative distribution function, which can be seen to be subadditive.
\end{example}

We close this section with a closure property about the class \ITS.
\begin{theorem}
\label{thm:inclusion}
Let $X$ be \ITS\ and $h$ a continuous star-shaped function. Then $h(X)$ is \ITS.
\end{theorem}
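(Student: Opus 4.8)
The plan is to verify the characterisation (\ref{eq:sub-1x}) of Lemma~\ref{lem:ITS-carac} directly for $Y=h(X)$. Two preliminary observations set things up. Since $h$ is star-shaped, $h(x)/x$ is increasing with $h(0)=0$; in the relevant setting where $h$ is nonnegative this forces $h$ to be increasing on $[0,\infty)$, and, $h$ being continuous, the sublevel set $\{x:h(x)\le y\}$ is a closed ray $(-\infty,h^{-1}(y)]$, whence $F_{h(X)}(y)=F_X(h^{-1}(y))$ for every $y$. Moreover, for the statement to be meaningful one first checks eligibility, i.e. $F_{h(X)}(0)=0$: this follows from $X>0$ a.s. once one knows $h(x)>0$ for $x>0$, which is the relevant non-degenerate case, so that $h(X)>0$ a.s.

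The structural heart of the argument is that the generalised inverse of a star-shaped function is anti-star-shaped. Writing $g=h^{-1}$ and parametrising $y=h(x)$, one has $g(y)/y=x/h(x)$, which is decreasing in $y$ precisely because $h(x)/x$ is increasing in $x$. Consequently $g$ satisfies the sub-homogeneity bound $g(\lambda y)\le\lambda\,g(y)$ for every $\lambda\ge1$; applied with $\lambda=1/\theta$ and $\lambda=1/(1-\theta)$, this yields, for $\theta\in(0,1)$, the two key inequalities $h^{-1}(y/\theta)\le h^{-1}(y)/\theta$ and $h^{-1}(y/(1-\theta))\le h^{-1}(y)/(1-\theta)$.

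With these in hand the computation closes in one line. Using $F_{h(X)}=F_X\circ h^{-1}$, the monotonicity of $F_X$, and then the \ITS\ property of $X$ (via Lemma~\ref{lem:ITS-carac}) evaluated at the point $z=h^{-1}(y)\ge0$, I would chain
\begin{align*}
F_{h(X)}(\tfrac{y}{\theta})+F_{h(X)}(\tfrac{y}{1-\theta})
&=F_X\!\big(h^{-1}(\tfrac{y}{\theta})\big)+F_X\!\big(h^{-1}(\tfrac{y}{1-\theta})\big)\\
&\le F_X\!\big(\tfrac{h^{-1}(y)}{\theta}\big)+F_X\!\big(\tfrac{h^{-1}(y)}{1-\theta}\big)
\le F_X\!\big(h^{-1}(y)\big)+1=F_{h(X)}(y)+1,
\end{align*}
which is exactly (\ref{eq:sub-1x}) for $h(X)$, so $h(X)$ is \ITS\ by Lemma~\ref{lem:ITS-carac}.

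The main obstacle I anticipate is not the inequality chain itself but the careful handling of the generalised inverse when $h$ has flat pieces (so $h$ is not strictly increasing): there the identity $F_{h(X)}=F_X\circ h^{-1}$ and the passage $h(x)/x\text{ increasing}\Rightarrow h^{-1}(y/\theta)\le h^{-1}(y)/\theta$ must be reconciled with the sup/right-continuity conventions of Section~\ref{sec:main}, requiring a short check that the sub-homogeneity bound for $h^{-1}$ survives on the flat regions. An alternative, inverse-light route that sidesteps part of this bookkeeping is to write $F_{1/h(X)}^-(x)=G(\psi(x))$ with $G=F_{1/X}^-$ (subadditive and increasing by hypothesis) and $\psi(x)=1/h^{-1}(1/x)$, and then to show that $\psi$ is itself anti-star-shaped, hence subadditive, so that the composition $G\circ\psi$ is subadditive.
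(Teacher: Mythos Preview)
Your proposal is correct and follows essentially the same route as the paper: both use $F_{h(X)}=F_X\circ h^{-1}$ via continuity, pass from $h$ star-shaped to $h^{-1}$ anti-star-shaped to obtain $h^{-1}(y/\theta)\le h^{-1}(y)/\theta$, and then chain monotonicity of $F_X$ with (\ref{eq:sub-1x}) for $X$ at $z=h^{-1}(y)$. The only difference is that you supply the anti-star-shapedness argument explicitly (and flag the flat-piece bookkeeping), whereas the paper dispatches it by citing an external lemma.
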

\begin{proof}{Proof.}
Note that, as $h$ is continuous, $F_{h(X)}(x)=F_X(h^{-1}(x))$. Since $h$ is star-shaped, its inverse, $h^{-1}$, is anti-star-shaped (see Lemma~4.1 in \cite{ineq-order2024}), hence, given $\theta\in(0,1)$, it follows that $h^{-1}(\tfrac{x}{\theta})\leq\tfrac{h^{-1}(x)}{\theta}$ and $h^{-1}(\tfrac{x}{1-\theta})\leq\tfrac{h^{-1}(x)}{1-\theta}$. As every function considered is increasing, we have
\begin{eqnarray*}
\lefteqn{F_{h(X)}(\tfrac{x}{\theta}) + F_{h(X)}(\tfrac{x}{1-\theta})} \\
 & = & F_X\left(h^{-1}(\tfrac{x}{\theta})\right) + F_X\left(h^{-1}(\tfrac{x}{1-\theta})\right) \\
 & \leq & F_X(h^{-1}(x))+1=F_{h(X)}(x)+1,
\end{eqnarray*}
using (\ref{eq:sub-1x}) for the last inequality, thus, taking into account Lemma~\ref{lem:ITS-carac}, the proof is concluded. \Halmos
\end{proof}

\section{Main result}
We present our main result stating that the \ITS\ property implies the stochastic dominance between a random variable and a convex combination of its independent copies.

\begin{theorem}
\label{thm:main}
If $X$ is \ITS, then the stochastic dominance (\ref{eq:main-st}) holds.
\end{theorem}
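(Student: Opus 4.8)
The plan is to establish the case $n=2$ by a direct inclusion--exclusion argument on events, and then lift to general $n$ by induction. For the inductive step, given weights summing to one with, say, $\theta_1\in(0,1)$, I would write $\theta_1X_1+\cdots+\theta_nX_n=\theta_1X_1+(1-\theta_1)Y$, where $Y=\sum_{i=2}^n\frac{\theta_i}{1-\theta_1}X_i$ is a convex combination of the $n-1$ i.i.d. copies $X_2,\dots,X_n$. By the induction hypothesis $X\leqst Y$, and since $\leqst$ is preserved under multiplication by a positive constant and under adding the independent summand $\theta_1X_1$, one gets $\theta_1X_1+(1-\theta_1)Y\geq_{st}\theta_1X_1+(1-\theta_1)X'$ for an independent copy $X'$; the right-hand side is a two-term convex combination of i.i.d. copies, which dominates $X$ by the base case. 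Chaining these dominations reduces everything to $n=2$.

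For $n=2$, writing $S=\theta X_1+(1-\theta)X_2$, the goal $X\leqst S$ is equivalent to $\Prob(S\le x)\le F_X(x)$ for all $x\ge0$. Set $A=F_X(\tfrac{x}{\theta})$, $B=F_X(\tfrac{x}{1-\theta})$ and $C=F_X(x)$. On $\{S\le x\}$ each summand satisfies $\theta X_1\le x$ and $(1-\theta)X_2\le x$ (here $X_i\ge0$ is used), so $\{S\le x\}$ lies in the rectangle $\{X_1\le\tfrac{x}{\theta}\}\cap\{X_2\le\tfrac{x}{1-\theta}\}$, of probability $AB$. Inside this rectangle consider the corner event $D=\{x<X_1\le\tfrac{x}{\theta}\}\cap\{x<X_2\le\tfrac{x}{1-\theta}\}$: on $D$ one has $X_1>x$ and $X_2>x$, hence $S>\theta x+(1-\theta)x=x$, so $D$ is disjoint from $\{S\le x\}$ yet still contained in the rectangle. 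By independence $\Prob(D)=(A-C)(B-C)$, whence $\Prob(S\le x)\le AB-(A-C)(B-C)=C\,(A+B-C)$. Finally, Lemma~\ref{lem:ITS-carac} is exactly the statement $A+B\le 1+C$, which yields $\Prob(S\le x)\le C=F_X(x)$.

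The crux is the base case, and within it the choice of the correction event $D$. The naive containment in the rectangle only gives $\Prob(S\le x)\le AB$, which is the ``independent maximum'' bound and can exceed $F_X(x)$ --- already for the Pareto law of index one --- so one must recover the gain coming from \emph{summing} rather than \emph{maximising}. The key observation is that the corner $(x,x)$ lies on the surface $\theta u+(1-\theta)v=x$, which is precisely what forces $D\subseteq\{S>x\}$ and what lets the \ITS\ condition enter through $A+B\le1+C$. I expect two further points to require attention. First, the one-sided continuity at atoms should be tracked with the conventions of Section~\ref{sec:main}, so that the discrete \ITS\ models are genuinely covered. Second, one should resist attempting a direct $n$-fold version: the analogous inequality $\prod_iA_i-\prod_i(A_i-C)\le C$ \emph{fails} for $n\ge3$ under subadditivity alone, so the pairwise reduction above is not merely convenient but necessary.
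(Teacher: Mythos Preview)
Your argument is correct. The base case yields exactly the paper's bound: writing $A=F_X(\tfrac{x}{\theta})$, $B=F_X(\tfrac{x}{1-\theta})$, $C=F_X(x)$, the paper bounds the conditioning integral piecewise (height $B$ on $[0,x]$, height $C$ on $[x,x/\theta]$, integrated against $dF_X$) to obtain $\Prob(S\le x)\le BC+C(A-C)=C(A+B-C)$, and then invokes $A+B\le 1+C$. Your inclusion--exclusion computation $AB-(A-C)(B-C)=C(A+B-C)$ is the same quantity: both are computing the probability of the same L-shaped region (rectangle minus the corner $D$) in the $(X_1,X_2)$ plane. So the geometric content of the two base-case arguments is identical, only the packaging differs.

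Where you genuinely diverge is the induction step. The paper re-runs the conditioning argument at every $n$: it conditions on $X_n$, applies the induction hypothesis inside the integral, and then performs a second piecewise estimate (Figure~2) together with one more use of the \ITS\ inequality. Your reduction---write the sum as $\theta_1X_1+(1-\theta_1)Y$, apply the induction hypothesis to $Y$, and use that $\leqst$ is preserved under positive scaling and under adding an independent summand---collapses everything to the two-term case in one line. This is shorter and uses only standard closure properties of $\leqst$; the price is that it relies on those closure facts being quotable, whereas the paper's argument is entirely self-contained. Your remark that a direct $n$-variable version of the corner bound fails for $n\ge3$ is a useful warning, though the paper does not attempt that route either.
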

\begin{proof}{Proof.}
We proceed by induction on the number of random variables. Conditioning, using the independence of the random variables, and remembering they are nonnegative, it is easily seen that, for every $x\geq 0$ and $\theta\in(0,1)$,
$$
\Prob\left(\theta X_1+(1-\theta) X_2 > x\right) =1-\int_0^{x/\theta} F_X(\tfrac{x-\theta t}{1-\theta})\,F_X(dt).
$$
To find an upper bound for the integral, we consider the decomposition described in Figure~\ref{fig:f1}, from which follows easily that
$$
\int_0^{x/\theta} F_X(\tfrac{x-\theta t}{1-\theta})\,F_X(dt)
\leq F_X(\tfrac{x}{1-\theta})F_X(x)+F_X(x)\left(F_X(\tfrac{x}{\theta})-F_X(x)\right)
\leq F_X(x),
$$
using (\ref{eq:sub-1x}) for the last inequality.
\begin{figure}
\centering
\begin{tikzpicture}%[xscale=1.75,yscale=.8]
  \draw[->] (-.25,0) -- (7,0) coordinate[label = {below:$t$}] (xmax);
  \draw[->] (0,-.25) -- (0,4) coordinate; %[label = {left:$j$}] (ymax);
  \draw (6,-.1) coordinate[label = {below:{\small $\tfrac{x}\theta$}}] -- (6,.1);
  \draw (2.5,-.1) coordinate[label = {below:{\small $x$}}] -- (2.5,.1);
  \draw (-.1,3) coordinate[label = {left:{\small $F_X(\tfrac{x}{1-\theta})$}}] -- (.1,3);
  \draw (-.1,1.5) coordinate[label = {left:{\small $F_X(x)$}}] -- (.1,1.5);
  \draw (0,3) to [out=350,in=160] (2.5,1.5) to [out=340,in=180] (6,0);
  \draw[dashed] (0,3) -- (2.5,3) -- (2.5,0);
  \draw[dashed] (2.5,1.5) -- (6,1.5) -- (6,0);
\end{tikzpicture}
\caption{Upper bound for the integral in the initial induction step.}\label{fig:f1}
\end{figure}
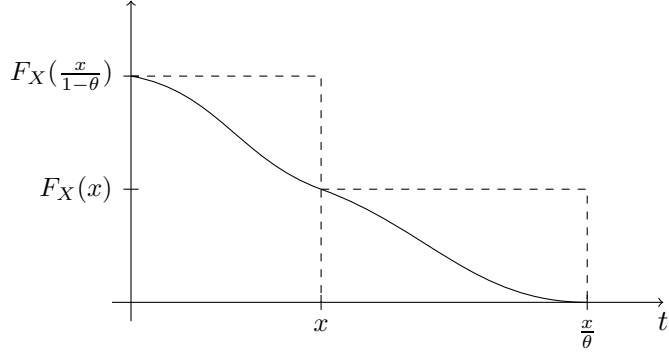
So, it follows that $\Prob\left(\theta X_1+(1-\theta) X_2 > x\right)\geq 1-F_X(x)=\Prob(X>x)$, so (\ref{eq:main-st}) holds for $n=2$.

Assume now that (\ref{eq:main-st}) holds whenever considering $n-1$ random variables. Given $\theta_1,\ldots,\theta_n>0$ satisfying $\theta_1+\cdots+\theta_n=1$, we have that
\begin{eqnarray*}
\lefteqn{\Prob\left(\theta_1 X_1+\cdots+\theta_n X_n > x\right)} \\
 & = & \Prob\left(X_n\geq\tfrac{x}{\theta_n}\right)+\Prob\left(\theta_1 X_1+\cdots+\theta_n X_n\geq x,X_n\leq\tfrac{x}{\theta_n} \right) \\
 & = & \Fbar_X(\tfrac{x}{\theta_n})+\int_0^{x/\theta_n}
     \Prob\left(\tfrac{\theta_1 X_1+\cdots+\theta_{n-1} X_{n-1}}{1-\theta_n}\geq\tfrac{x-\theta_n t}{1-\theta_n}\right)\,F_X(dt) \\
 & \geq & \Fbar_X(\tfrac{x}{\theta_n})+\int_0^{x/\theta_n} \Fbar_X(\tfrac{x-\theta_n t}{1-\theta_n})\,F_X(dt),
\end{eqnarray*}
using the induction hypothesis. We need now to find a lower bound for this integral, which may achieved using the decomposition depicted in Figure~\ref{fig:f2},
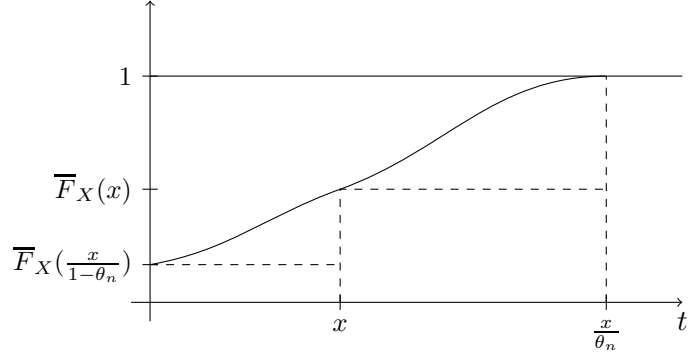
\begin{figure}
\centering
\begin{tikzpicture}%[xscale=1.75,yscale=.8]
  \draw[->] (-.25,0) -- (7,0) coordinate[label = {below:$t$}] (xmax);
  \draw[->] (0,-.25) -- (0,4) coordinate; %[label = {left:$j$}] (ymax);
  \draw (-.1,3) coordinate[label={left:{\small 1}}] -- (.1,3);
  \draw (0,3) -- (7,3);
  \draw (6,-.1) coordinate[label = {below:{\small $\tfrac{x}{\theta_n}$}}] -- (6,.1);
  \draw (2.5,-.1) coordinate[label = {below:{\small $x$}}] -- (2.5,.1);
  \draw (-.1,.5) coordinate[label = {left:{\small $\Fbar_X(\tfrac{x}{1-\theta_n})$}}] -- (.1,.5);
  \draw (-.1,1.5) coordinate[label = {left:{\small $\Fbar_X(x)$}}] -- (.1,1.5);
  \draw (0,.5) to [out=10,in=200] (2.5,1.5) to [out=20,in=180] (6,3);
  \draw[dashed] (0,.5) -- (2.5,.5);
  \draw[dashed] (2.5,0) -- (2.5,1.5) -- (6,1.5);
  \draw[dashed] (6,0) -- (6,3);
\end{tikzpicture}
\caption{Lower bound for the integral in the induction step.}\label{fig:f2}
\end{figure}
from which follows that
\begin{eqnarray*}
\lefteqn{\Prob\left(\theta_1 X_1+\cdots+\theta_n X_n > x\right)} \\
 & \geq & \Fbar_X(\tfrac{x}{\theta_n})+\int_0^{x/\theta_n} \Fbar_X(\tfrac{x-\theta_n t}{1-\theta_n})\,F_X(dt) \\
 & \geq & \Fbar_X(\tfrac{x}{\theta_n})
    +\Fbar_X(\tfrac{x}{1-\theta_n})F_X(x)
    + \Fbar_X(x)\left(F_X(\tfrac{x}{\theta_n})-F_X(x)\right) \\
 & \geq & \Fbar_X(\tfrac{x}{\theta_n})+\Fbar_X(\tfrac{x}{1-\theta_n})
      +\Fbar_X(x)\left(\Fbar_X(x)-\Fbar_X(\tfrac{x}{\theta_n})-\Fbar_X(\tfrac{x}{1-\theta_n})\!\right).
\end{eqnarray*}
Finally, noting that the subadditivity assumption implies that the large parenthesis is negative, it follows that  $\Prob\left(\theta_1 X_1+\cdots+\theta_n X_n > x\right)\geq \Fbar_X(\tfrac{x}{\theta_n})+\Fbar_X(\tfrac{x}{1-\theta_n}) \geq\Fbar_X(x)$, using (\ref{eq:sub-1x}) written in terms of the survival function, thus concluding the proof. \Halmos
\end{proof}

It is easy to see that, if $X$ has finite mean, then $X$ is more variable then $\theta_1X_1+\cdots +\theta_n X_n$ in terms of the convex order, that is, for every convex function $\phi$, $\Mexp\phi(\theta_1X_1+\cdots +\theta_n X_n)\leq \Mexp \phi(X)$ (this follows by using repeatedly the definition of convexity), meaning, for instance, that $X$ has larger variance (when it is finite) than the convex combination. Differently, as already remarked in \cite{superPareto2024}, the stochastic dominance stated in (\ref{eq:main-st}) is crucially linked to the fact that we are dealing with random variables with infinite means (see Proposition~2 in \cite{superPareto2024}). We present here another proof, using more elementary arguments.
\begin{proposition}
\label{prop:inf-mean}
Let $X_1,\ldots,X_n$ be independent random variables such that $\Prob(X_1=\cdots=X_n)<1$, and $\theta_1,\ldots,\theta_n>0$ such that $\theta_1+\cdots+\theta_n=1$. If (\ref{eq:main-st}) holds, then $X$ has infinite mean.
\end{proposition}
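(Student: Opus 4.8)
The plan is to argue by contraposition: assuming that $X$ has a finite mean, I will show that (\ref{eq:main-st}) forces the copies to coincide almost surely, contradicting the hypothesis $\Prob(X_1=\cdots=X_n)<1$. Write $S=\theta_1X_1+\cdots+\theta_nX_n$. By linearity and $\theta_1+\cdots+\theta_n=1$ one has $\Mexp S=\Mexp X$, so the two random variables share the same finite mean. On the other hand, if (\ref{eq:main-st}) holds then $X\leqst S$, which in particular gives $\Mexp X\le\Mexp S$; here it is an equality, and the first step is to upgrade $X\leqst S$ to equality in distribution. This is a standard consequence of integrating the gap between the survival functions: since $\Fbar_X\le\Fbar_S$ everywhere and the difference of means $\Mexp S-\Mexp X$ equals the integral of this nonnegative gap over the positive axis (together with the gap between the distribution functions over the negative axis, should $X$ take negative values), its vanishing forces $\Fbar_X=\Fbar_S$ almost everywhere, hence everywhere by right-continuity. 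Thus $X$ and $S$ have the same distribution.

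Once equality in distribution is in hand, I will test it against a strictly convex function of at most linear growth, for instance $\phi(x)=\sqrt{1+x^2}$, whose expectation is finite because $\Mexp|X|<\infty$ (in the nonnegative setting of the paper one may equally take the bounded $\phi(x)=e^{-x}$). Equality in distribution gives $\Mexp\phi(S)=\Mexp\phi(X)$. However, the very computation that yields the convex order relation recalled just before this proposition shows that $\phi(S)=\phi(\sum_i\theta_iX_i)\le\sum_i\theta_i\phi(X_i)$ pointwise, whence, taking expectations and using $\Mexp\phi(X_i)=\Mexp\phi(X)$, one gets $\Mexp\phi(S)\le\Mexp\phi(X)$. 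Because $\phi$ is strictly convex and at least two of the weights are positive, this pointwise inequality is strict on the event $\{X_1,\ldots,X_n \text{ not all equal}\}$; as this event has positive probability by hypothesis, the integrated inequality is strict, $\Mexp\phi(S)<\Mexp\phi(X)$. This contradicts the equality forced by the coincidence of distributions, and the contradiction establishes that $\Mexp X=+\infty$.

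The routine parts are the expectation identity $\Mexp S=\Mexp X$ and the strict Jensen inequality. The main obstacle, and the step deserving care, will be the upgrade from stochastic dominance to equality in distribution: it relies on both means being finite (so that the integral of the survival-function gap is well defined and vanishes) and on right-continuity to pass from ``almost everywhere'' to ``everywhere''. A secondary point is the choice of test function: one cannot use a globally bounded strictly convex function on all of $\mathbb{R}$, so $\phi$ must be chosen strictly convex yet integrable against $F_X$, which is precisely what the finite-mean assumption guarantees for a function of linear growth such as $\sqrt{1+x^2}$.
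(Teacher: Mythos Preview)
Your proof is correct and follows essentially the same approach as the paper: assume finite mean, upgrade $X\leqst S$ to equality in distribution via the equal-means argument, then derive a contradiction from a strict Jensen inequality. The only cosmetic differences are that the paper reduces to $n=2$ and tests with the strictly concave $\sqrt{x}$ (using that the variables are nonnegative), whereas you work with general $n$ and the strictly convex $\sqrt{1+x^2}$; your treatment of the upgrade step and of the integrability of the test function is in fact more explicit than the paper's.
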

\begin{proof}{Proof.}
It is enough to prove the case $n=2$. Denote $Y=\theta X_1+(1-\theta)X_2$, where $X_1$ and $X_2$ are independent and have the same distribution as $X$ and are such that $\Prob(X_1\ne X_2)>0$. Moreover, assume $\Mexp(X)$ is finite. As then follows that $\Mexp(Y)=\Mexp(X)$, both finite, this, together with (\ref{eq:main-st}), implies that $X$ and $Y$ have the same distribution. Therefore, $\Var\left(\sqrt{X}\right)=\Var\left(\sqrt{Y}\right)$, implying that $\Mexp\left(\sqrt{X}\right)=\Mexp\left(\sqrt{Y}\right)$. But this is not possible, as Jensen's inequality implies that $\Mexp\left(\sqrt{Y}\right)=\Mexp\left(\sqrt{\theta X_1+(1-\theta)X_2}\right) > \theta\Mexp\left(\sqrt{X}\right)+(1-\theta)\Mexp\left(\sqrt{X}\right)=\Mexp\left(\sqrt{X}\right)$, the inequality being strict because $\Prob(X_1\ne X_2)>0$. \Halmos
\end{proof}

As a consequence of Theorem~\ref{thm:main} and Proposition~\ref{prop:inf-mean}, the following result is immediate.
\begin{proposition}
\label{prop:mean-fsub}
If $X$ is \ITS, then $\Mexp X$ is infinite.
\end{proposition}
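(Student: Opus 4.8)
The plan is to combine Theorem~\ref{thm:main} with Proposition~\ref{prop:inf-mean}, which together make the conclusion almost immediate. Since $X$ is \ITS, Theorem~\ref{thm:main} guarantees that the stochastic dominance (\ref{eq:main-st}) holds for any collection of i.i.d. copies $X_1,\ldots,X_n$ and any convex weights $\theta_1,\ldots,\theta_n>0$. To extract the infinite-mean conclusion from Proposition~\ref{prop:inf-mean}, however, I must verify its non-degeneracy hypothesis $\Prob(X_1=\cdots=X_n)<1$, which is the only point requiring a short argument.

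First I would rule out the degenerate case, since this is where the single piece of genuine (if minor) work lies. If $X$ were almost surely equal to some constant $c>0$ (recall that the \ITS\ definition requires $F_X(0)=0$, so $c>0$), then $F_X$ would be the indicator $\mathbf{1}\{x\geq c\}$, and I would test the characterisation (\ref{eq:sub-1x}) of Lemma~\ref{lem:ITS-carac}. Taking $\theta=\tfrac12$ and any $x$ with $c/2\le x<c$, for instance $x=\tfrac{3}{5}c$, both $\tfrac{x}{\theta}$ and $\tfrac{x}{1-\theta}$ exceed $c$ while $x<c$, so the left-hand side of (\ref{eq:sub-1x}) equals $2$ whereas the right-hand side equals $1$, a contradiction. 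Hence a constant random variable is never \ITS, and consequently any \ITS\ variable $X$ is non-degenerate, so its i.i.d. copies satisfy $\Prob(X_1=\cdots=X_n)<1$.

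With non-degeneracy secured, the two ingredients fit together directly: Theorem~\ref{thm:main} supplies (\ref{eq:main-st}), and Proposition~\ref{prop:inf-mean} then forces $\Mexp X=+\infty$. I do not expect any real obstacle here, as the substantive analysis has already been carried out in the preceding results. The one thing one must not overlook is precisely the degeneracy check, because Proposition~\ref{prop:inf-mean} genuinely fails for a constant $X$ (for which (\ref{eq:main-st}) holds trivially with finite mean); guaranteeing that the \ITS\ hypothesis itself excludes this case is what makes the application of Proposition~\ref{prop:inf-mean} legitimate and the final statement correct.
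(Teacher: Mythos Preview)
Your proposal is correct and follows exactly the paper's approach, which simply states that the result is an immediate consequence of Theorem~\ref{thm:main} and Proposition~\ref{prop:inf-mean}. In fact, your explicit verification that a degenerate (constant) random variable cannot be \ITS\ fills in the one point the paper leaves implicit, since Proposition~\ref{prop:inf-mean} genuinely requires $\Prob(X_1=\cdots=X_n)<1$.
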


\section{Comparing with earlier results}
We will now address the relations between the \ITS\ class and the other families of distributions implying (\ref{eq:main-st}), discussed in the Introduction.

We shall represent the cumulative distribution function of the Pareto with shape parameter 1 by $\mathcal{P}(x)=1-\frac1x$, for $x\geq1$. Note that it is straightforward to verify that $\mathcal{P}$ satisfies the subadditivity assumption in Theorem~\ref{thm:main}, that is, if $Z$ is such that $F_Z=\mathcal{P}$ then $Z$ is \ITS. Further, we shall denote the Cauchy distribution function by $\mathcal{C}(x)=\frac1\pi \arctan(x)+\frac12$, for $x\in\mathbb{R}$. If a random variable $T$ has cumulative distribution function $\mathcal{C}$ it can be verified that it is not in the class \ITS, however, its absolute value $\vert T\vert$ is \ITS.

\subsection{The super-Pareto class}
\label{subsec:superPareto}
This family is closely linked to the Pareto distributions, as follows from its definition.
\begin{definition}[\cite{superPareto2024}]
\label{def:superPareto}
A random variable $Y$ is super-Pareto if $Y\stackrel{d}{=}h(Z)$, where $F_Z=\mathcal{P}$ and $h$ is an increasing, convex and nonconstant function.
\end{definition}
As mentioned in \cite{superPareto2024}, the super-Pareto family includes the generalised Pareto distributions, the Burr distributions and the log-logistic distribution. Therefore, it includes some common models either in economical applications or in extreme value theory.
An adapted version of the main result in \cite{superPareto2024} is quoted below.
\begin{theorem}[adapted version of Theorem 1 in \cite{superPareto2024}]
\label{thm:main-velho}
The conclusion of Theorem~\ref{thm:main} holds if $X$ is super-Pareto.
\end{theorem}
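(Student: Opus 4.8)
The plan is to resist the tempting route of trying to show directly that a super-Pareto variable belongs to the class \ITS\ and then invoking Theorem~\ref{thm:main}: this fails, because the convex map $h$ of Definition~\ref{def:superPareto} need not be star-shaped, so Theorem~\ref{thm:inclusion} does not apply, and the \ITS\ property is genuinely not preserved under arbitrary increasing convex transformations. A simple translation $z\mapsto z+c$ of the Pareto, which is increasing, convex and nonconstant, already destroys subadditivity of the inverted distribution, so the inclusion ``super-Pareto $\subseteq$ \ITS'' is false in general. Instead, I would keep the underlying Pareto variable and transfer the stochastic dominance through $h$ at the level of the order itself.

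First I would use the base case. Since $F_Z=\mathcal{P}$ satisfies (\ref{eq:sub-1x}), as already noted in the text, $Z$ is \ITS, so Theorem~\ref{thm:main} applies to $Z$ directly: for i.i.d. Pareto copies $Z_1,\dots,Z_n$ of $Z$ and weights $\theta_1,\dots,\theta_n\ge 0$ with $\theta_1+\cdots+\theta_n=1$, we obtain $Z\leqst\theta_1 Z_1+\cdots+\theta_n Z_n$.

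Next I would push $h$ through this relation in two steps. Because $h$ is increasing and the standard stochastic order is preserved under increasing transformations \cite{shaked2007}, applying $h$ to both sides gives $h(Z)\leqst h(\theta_1 Z_1+\cdots+\theta_n Z_n)$. Because $h$ is convex, the finite Jensen inequality yields, for every realization, $h(\theta_1 Z_1+\cdots+\theta_n Z_n)\le \theta_1 h(Z_1)+\cdots+\theta_n h(Z_n)$ almost surely, and an almost-sure inequality trivially implies the corresponding $\leqst$ relation. Since $h(Z)\stackrel{d}{=}X$ and $h(Z_1),\dots,h(Z_n)$ are i.i.d. copies of $X$, chaining the two relations by transitivity of $\leqst$ delivers exactly $X\leqst\theta_1 X_1+\cdots+\theta_n X_n$, which is (\ref{eq:main-st}).

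The only real obstacle here is conceptual rather than computational: recognising that the composition must be handled through the stochastic order, not through the \ITS\ class, so that the roles of monotonicity and convexity of $h$ are separated cleanly. Once the base Pareto dominance is supplied by Theorem~\ref{thm:main}, the two remaining ingredients---monotone preservation of $\leqst$ and convexity via Jensen---are routine, and in particular no fresh induction on $n$ is required, since Theorem~\ref{thm:main} already provides the full $n$-variable statement for the Pareto.
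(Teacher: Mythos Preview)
Your proof is correct. The paper itself does not give a proof of Theorem~\ref{thm:main-velho}: it is simply quoted as an external result from \cite{superPareto2024}. The paper's own contribution in this direction is Theorem~\ref{cor:inclusion}, which takes precisely the route you warn against---showing that a nonnegative super-Pareto variable lies in \ITS\ and then invoking Theorem~\ref{thm:main}---by reparametrising through $Y=Z-1$ so that the resulting convex map is star-shaped and Theorem~\ref{thm:inclusion} applies.

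Your approach is genuinely different and cleaner: you apply Theorem~\ref{thm:main} only to the base Pareto $Z$, and then transfer the dominance to $X=h(Z)$ using monotonicity of $h$ (to preserve $\leqst$) followed by convexity of $h$ (pointwise Jensen), never asking whether $X$ itself is \ITS. Your translation observation is also on point: $Z+c$ with $c>0$ is super-Pareto (indeed $1/\Fbar$ is affine, so Proposition~\ref{prop:superP-conc} applies) yet $1-F_{Z+c}(1/x)=x/(1-cx)$ on $[0,1/(1+c)]$ is convex and hence not subadditive, so $Z+c$ is not \ITS. Thus the paper's \ITS-based route in Theorem~\ref{cor:inclusion} implicitly needs the left endpoint of the support of $X$ to be $0$, not mere nonnegativity, whereas your argument carries no such restriction and recovers the full super-Pareto statement.
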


Therefore, we are interested in relating the super-Pareto assumption with our \ITS\ assumption. We start by an equivalent characterisation of the super-Pareto family given below.
\begin{proposition}[Proposition 1 in \cite{superPareto2024}]
\label{prop:superP-conc}
A random variable $X$, with cumulative distribution function $F_X$, is super-Pareto if and only if $\frac{1}{\Fbar_X(x)}$ is concave.
\end{proposition}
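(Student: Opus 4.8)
The plan is to exploit the explicit form of the Pareto survival function, $\overline{\mathcal{P}}(x)=\frac1x$, which makes $\frac1{\overline{\mathcal{P}}}$ the identity, and to translate the defining relation $X\stackrel{d}{=}h(Z)$ into a statement about $\frac1{\Fbar_X}$ alone. The guiding observation is that, since $h$ is increasing and $Z$ is continuous, $F_X(x)=F_Z(h^{-1}(x))=\mathcal{P}(h^{-1}(x))$, whence $\Fbar_X(x)=\overline{\mathcal{P}}(h^{-1}(x))=\frac1{h^{-1}(x)}$ and therefore
\[
\frac1{\Fbar_X(x)}=h^{-1}(x).
\]
Thus the function whose concavity is in question is nothing but the generalised inverse of $h$, and the whole proposition reduces to the duality between convexity of $h$ and concavity of $h^{-1}$. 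Incidentally, since $\frac1{\Fbar_X}=1+\Lambda_X$, concavity of $\frac1{\Fbar_X}$ is the same as concavity of the odds function, which is the phrasing used in the Introduction.

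For the forward implication I would start from the display above and invoke the standard fact that the generalised inverse of an increasing convex function is increasing and concave (an inversion principle analogous to the star-shaped one used, via Lemma~4.1 of \cite{ineq-order2024}, in the proof of Theorem~\ref{thm:inclusion}). Since $h$ is increasing, convex and nonconstant, $h^{-1}=\frac1{\Fbar_X}$ is then increasing and concave, as claimed.

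For the converse, suppose $\frac1{\Fbar_X}$ is concave and write $g=\frac1{\Fbar_X}$, which is automatically increasing because $\Fbar_X$ is decreasing. Define $h=g^{-1}$; by the same inversion duality $h$ is increasing and convex, and it is nonconstant provided $X$ is non-degenerate. It then remains to verify that $h(Z)\stackrel{d}{=}X$ when $F_Z=\mathcal{P}$: computing directly, $F_{h(Z)}(x)=\mathcal{P}(h^{-1}(x))=\mathcal{P}(g(x))=\mathcal{P}\!\left(\tfrac1{\Fbar_X(x)}\right)=1-\Fbar_X(x)=F_X(x)$, using $\mathcal{P}(y)=1-\frac1y$ in the penultimate step. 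This closes the equivalence.

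The main obstacle is the careful handling of generalised inverses, since the paper does not assume $F_X$ to be continuous or strictly monotone. Two points deserve attention. First, the identity $(g^{-1})^{-1}=g$, which is what makes $h^{-1}=g$ work in the converse, holds for monotone functions only up to the usual left/right-continuity conventions, so one must match the one-sided continuity of $\frac1{\Fbar_X}$ with the chosen definition of $v^{-1}(u)=\sup\{x:v(x)\leq u\}$. Second, the domain bookkeeping: since $Z$ is supported on $[1,+\infty)$ with $\mathcal{P}(1)=0$, the value $h(1)$ must be identified with the left endpoint of the support of $X$, namely $\inf\{x:\Fbar_X(x)<1\}$. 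Once these conventions are pinned down, the two short computations above are routine, and the preservation of convexity under inversion is classical.
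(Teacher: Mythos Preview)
The paper does not supply its own proof of this proposition: it is quoted verbatim as ``Proposition~1 in \cite{superPareto2024}'' and left unproved, the authors relying on the original source. Your argument is correct and is in fact the natural one; the identity $\frac{1}{\Fbar_X}=\mathcal{P}^{-1}\circ F_X=h^{-1}$ that you isolate is exactly what the paper uses a few lines later in its one-line proof of Proposition~\ref{prop:conv}, so your approach is fully aligned with the paper's viewpoint even though no explicit proof is given here. Your remarks on generalised inverses and on matching $h(1)$ with the left endpoint of the support are the right caveats, and nothing further is needed.
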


Note that the concavity of $\Lambda_X(x)=\frac{F_X(x)}{\Fbar_X(x)}=\frac1{\Fbar_X(x)}-1$ implies the differentiability of $F_X$ at almost every point of the support, hence the concavity of $\Lambda_X$ is equivalent to the decreasingness of the odds rate $\lambda_X(x)=\Lambda_X^\prime(x)=\frac{f_X(x)}{\Fbar_X^2(x)}$, considering side derivatives at the points of nondifferentiability. This family of distributions has been addressed in \cite{lando2023nonparametric} or, more recently, in \cite{ineq-order2024}, being referred as the \DOR\ family (for ``\textit{decreasing odds rate}''), that can be characterised using an appropriate stochastic order.
We need some additional definitions to describe these relations more precisely.
\begin{definition}
\label{def:classes}
We say that a random variable $X$ is \DOR\ if its odds function $\Lambda_X(x)$ is concave.
\end{definition}
\begin{definition}
\label{def:order2}
Given two cumulative distribution functions $F_1$ and $F_2$, we say that, $F_1$ is smaller than $F_2$ in the convex transform order, represented by $F_1\leqc F_2$, if $F_2^{-1}\circ F_1$ is convex.
\end{definition}

The following result relates the shape of the odds function with the Pareto distribution through the convex transform order.
\begin{proposition}
\label{prop:conv}
$F_X$ is \DOR\ (or, equivalently, $X$ is super-Pareto) if and only if $\mathcal{P}\leqc F_X$. Analogously $F_X$ has increasing odds rate if and only if $F_X\leqc\mathcal{P}$.
\end{proposition}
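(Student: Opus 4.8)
The plan is to reduce both equivalences to a single explicit computation of the composition $\mathcal{P}^{-1}\circ F_X$, and then simply to read off convexity or concavity. First I would invert the Pareto distribution: from $\mathcal{P}(x)=1-\frac1x$ one obtains $\mathcal{P}^{-1}(u)=\frac1{1-u}$, so that
\[
\mathcal{P}^{-1}(F_X(x))=\frac{1}{1-F_X(x)}=\frac{1}{\Fbar_X(x)}=\Lambda_X(x)+1.
\]
This identity is the engine of the whole argument: it says that the transform $\mathcal{P}^{-1}\circ F_X$ is, up to the additive constant $1$, exactly the odds function $\Lambda_X$, whose shape governs both the \DOR\ and the increasing-odds-rate properties.

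For the second equivalence this is almost immediate. By Definition~\ref{def:order2}, the relation $F_X\leqc\mathcal{P}$ means that $\mathcal{P}^{-1}\circ F_X$ is convex; by the identity above this is the convexity of $\Lambda_X+1$, hence of $\Lambda_X$, which is precisely the statement that the odds rate $\lambda_X=\Lambda_X'$ is increasing.

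For the first equivalence I would exploit the duality between an increasing function and its inverse. Since $F_X$ and $\mathcal{P}$ are increasing, the transform $g:=F_X^{-1}\circ\mathcal{P}$ appearing in $\mathcal{P}\leqc F_X$ is increasing, and its (generalised) inverse is $g^{-1}=\mathcal{P}^{-1}\circ F_X$. An increasing function is convex if and only if its inverse is concave; therefore $\mathcal{P}\leqc F_X$, i.e.\ convexity of $g$, is equivalent to concavity of $\mathcal{P}^{-1}\circ F_X=\Lambda_X+1$, that is, to the concavity of $\Lambda_X$. By Definition~\ref{def:classes} this is exactly the \DOR\ property, which by Proposition~\ref{prop:superP-conc} coincides with being super-Pareto.

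The routine parts are the inversion of $\mathcal{P}$ and the elementary fact that the inverse of an increasing convex function is concave. The step demanding the most care is the handling of generalised inverses: I must verify that $\mathcal{P}^{-1}\circ F_X$ is genuinely the inverse of $F_X^{-1}\circ\mathcal{P}$ on the relevant ranges, and that the convex/concave duality survives at points where $F_X$ is flat or jumps, so that the argument does not tacitly assume $X$ to be absolutely continuous or strictly increasing on its support. Restricting attention to the support of $X$ and to $u\in(0,1)$, and invoking the standard composition rules for generalised inverses, should dispose of this point.
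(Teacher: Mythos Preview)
Your proposal is correct and follows exactly the paper's approach: compute $\mathcal{P}^{-1}\circ F_X=\frac{1}{\Fbar_X}=\Lambda_X+1$ and read off the convexity/concavity. The paper's proof is much terser (it dispatches both directions with ``so the result follows immediately''), whereas you spell out the convex/concave inverse duality and flag the generalised-inverse technicalities, but the underlying idea is identical.
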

\begin{proof}{Proof.}
Noting that the quantile of the Pareto is $\mathcal{P}^{-1}(u)=\frac1{1-u}$, we have $\mathcal{P}^{-1}\circ F_X=\frac1{\Fbar_X}=\Lambda_X+1$, so the result follows immediately. \Halmos
\end{proof}

As mentioned in Example~\ref{rem:frechet}, the Fr\'{e}chet class, with cumulative distribution functions $\mathcal{H}(x)=e^{-1/x}$ satisfies the assumption on Theorem~\ref{thm:main}. On other hand, its odds functions $\Lambda_{\mathcal{H}}$ is convex, hence $\mathcal{H}\leqc\mathcal{P}$, so random variables with cumulative distribution function $\mathcal{H}$ are not super-Pareto. This provides an example where our main result Theorem~\ref{thm:main} implies (\ref{eq:main-st}), while Theorem~\ref{thm:main-velho} is not applicable, as its assumptions are not satisfied. Further, by transitivity, note that if $X$ is super-Pareto we have $\mathcal{H}\leqc\mathcal{P}\leqc F_X$.

We now present one example showing that shape conditions about the odds function are not the most appropriate way to find sufficient conditions for the stochastic dominance (\ref{eq:main-st}).
\begin{example}
\label{ex:oddsFbar-c}
The random variables $Y_b$ introduced in Example~\ref{ex:oddsFbar} have odds function $\Lambda_b(x)=x^b\Log(x+1)$ that, as mentioned before, are not concave nor convex for $b<1$. Nevertheless, as referred in Example~\ref{ex:oddsFbar}, $Y_b$, for $b\in(0,1)$ suitably small, is \ITS.
\end{example}

Finally, we relate the super-Pareto class with the \ITS\ family.
\begin{theorem}
\label{cor:inclusion}
If $X$ is nonnegative super-Pareto then $X$ is \ITS.
\end{theorem}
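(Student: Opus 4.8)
The plan is to deduce the statement from the closure property Theorem~\ref{thm:inclusion}, together with the fact, recorded at the start of this section, that the Pareto variable $Z$ with $F_Z=\mathcal{P}$ is \ITS. By Definition~\ref{def:superPareto} we may write $X\stackrel{d}{=}h(Z)$ with $h$ increasing, convex and nonconstant, so the natural idea is to present $X$ as $h(Z)$ and invoke Theorem~\ref{thm:inclusion}. The difficulty is that Theorem~\ref{thm:inclusion} requires $h$ to be \emph{star-shaped}, whereas super-Pareto only furnishes a \emph{convex} $h$; bridging this gap is where the nonnegativity hypothesis must enter.

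To make the bridge transparent I would work instead with the equivalent analytic description of Proposition~\ref{prop:superP-conc}, namely that $g:=\frac{1}{\Fbar_X}$ is concave. Since $X$ is nonnegative with $F_X(0)=0$, the function $g$ is defined and concave on all of $[0,+\infty)$, is increasing, and satisfies $g(0)=1\geq 0$. The key elementary fact is that a concave $g$ with $g(0)\geq 0$ has $\frac{g(t)}{t}$ decreasing: writing $x=\frac{x}{t}\,t+(1-\frac{x}{t})\,0$ for $0<x\leq t$ and using concavity together with $g(0)\geq 0$ gives $g(x)\geq\frac{x}{t}\,g(t)$, that is $g(t)\leq\frac{t}{x}\,g(x)$.

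With this in hand the verification of the \ITS\ property via Lemma~\ref{lem:ITS-carac} is immediate. Rewriting (\ref{eq:sub-1x}) in terms of survival functions, I must show $\Fbar_X(\tfrac{x}{\theta})+\Fbar_X(\tfrac{x}{1-\theta})\geq\Fbar_X(x)$, i.e.
\[
\frac{1}{g(\tfrac{x}{\theta})}+\frac{1}{g(\tfrac{x}{1-\theta})}\geq\frac{1}{g(x)}.
\]
Applying the bound $g(t)\leq\frac{t}{x}g(x)$ at $t=\tfrac{x}{\theta}$ and $t=\tfrac{x}{1-\theta}$ yields $\frac{1}{g(x/\theta)}\geq\frac{\theta}{g(x)}$ and $\frac{1}{g(x/(1-\theta))}\geq\frac{1-\theta}{g(x)}$, and summing the two gives exactly $\frac{1}{g(x)}$. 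This establishes (\ref{eq:sub-1x}) and hence that $X$ is \ITS.

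The main obstacle, and the reason the hypothesis cannot be dropped, is precisely the role of the value $g(0)\geq 0$ (equivalently, that the support of $X$ begins at the origin). Concavity of $g$ on the support alone is not enough: if the support started strictly above $0$ the ratio $\frac{g(t)}{t}$ would fail to be decreasing near the left endpoint, the decisive inequality $g(t)\leq\frac{t}{x}g(x)$ would break down, and \ITS\ could genuinely fail. In the language of Theorem~\ref{thm:inclusion} this is the same subtlety: it is the nonnegativity that upgrades the convex transform $h$ to a star-shaped one (after extending it by $0$ on the interval below the support of $Z$), which is what lets $h(Z)$ inherit the \ITS\ property of the Pareto parent.
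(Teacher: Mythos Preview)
Your argument is correct and reaches the goal by a somewhat different route than the paper. The paper shifts the Pareto to $Y=Z-1$ (so that $F_Y(x)=\tfrac{x}{x+1}$ on $[0,\infty)$), observes that the convex $h$ with $h(0)=0$ is therefore star-shaped, checks that $Y$ is \ITS, and then invokes the closure property Theorem~\ref{thm:inclusion}. You instead work with the analytic characterisation of Proposition~\ref{prop:superP-conc}: from concavity of $g=1/\Fbar_X$ together with $g(0)=1\geq0$ you extract that $g(t)/t$ is decreasing and verify~(\ref{eq:sub-1x}) directly. The underlying mechanism is the same---``convex with value $0$ at the origin $\Rightarrow$ star-shaped'' on the $h$ side is exactly ``concave with nonnegative value at the origin $\Rightarrow$ anti-star-shaped'' on the $g=h^{-1}$ side---but your version is self-contained and bypasses Theorem~\ref{thm:inclusion}, while the paper's version is more modular.

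One caveat worth noting (which your closing paragraph gestures at but does not fully pin down): the step ``$g$ is concave on all of $[0,+\infty)$'' really uses that the left endpoint of the support of $X$ is $0$. Proposition~\ref{prop:superP-conc} gives concavity only on the support; if the support were $[a,\infty)$ with $a>0$ then $g\equiv1$ on $[0,a)$ and $g$ need not be concave across $a$ (the Pareto $Z$ itself already shows this), and indeed a shift such as $Z+10$ is nonnegative and has concave $g$ on its support yet fails~(\ref{eq:sub-1x}). The paper's proof has the very same hidden assumption, appearing there as the unproved claim $h(0)=0$. So this is a subtlety of the statement rather than a flaw specific to your argument; your remark that nonnegativity is what ``upgrades'' convexity to star-shapedness is morally right, but it is the left endpoint being $0$, not mere nonnegativity, that does the work.
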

\begin{proof}{Proof.}
According to Proposition~\ref{prop:superP-conc}, $X$ is nonnegative \DOR. Let $Z$ be such that $F_Z=\mathcal{P}$ and $Y=Z-1$, where $F_Y(x)=\frac{x}{x+1}$, for $x\geq0$. Given the shift invariance of the convex transform order, $X$ being super-Pareto is equivalent to $X\stackrel{d}{=}h(Y)$, where $h$ is increasing convex and $h(0)=0$, hence $h$ is star-shaped. It is straightforward to verify that $Y$ is \ITS, so the conclusion follows by applying Theorem~\ref{thm:inclusion}. \Halmos
\end{proof}

\subsection{The super-heavy-tailed class}
\label{subsec:superheavy}
In \cite{ChenShneer2024} the class of \textit{super-heavy-tailed} distributions is introduced. As we shall see, this is closely related to a family of distributions that is well established in the literature.
\begin{definition}
\label{def:superheavy}
We say that a random variable $X$ such that $F_X(0)=0$ is
    \begin{enumerate}
    \item
    (\textbf{definition~2 in \cite{ChenShneer2024}}) super-heavy-tailed if $-\Log F_X(\frac1x)$ is subadditive;
    \item
    \NWU\ if $\Fbar_X(x)\Fbar_X(y)\leq\Fbar_X(x+y)$, for every $x$ and $y$.
    \end{enumerate}
\end{definition}
The characterisation of $X$ being \NWU, a well-known class of distributions, may be rewritten as $\Log\Fbar_X$ is superadditive (see the initial notes in \cite{shaked2007}), together with being nonnegative. Note that the nonnegativity is not strictly necessary for the above definitions.

It is readily seen that in the continuous case, $X$ being super heavy-tailed is equivalent to $\frac1X$ being \NWU. Note that, in the general case, $F_X(\frac1x)=1-F_{\frac1X}^-(x)$. 

The following characterisation is straightforward.
\begin{lemma}
\label{lem:inv-NWU}
A random variable $X$ is super-heavy-tailed if and only if
\begin{equation}
\label{eq:inv-NWU}
F_X(\tfrac{x}{\theta})F_X(\tfrac{x}{1-\theta})\leq F_X(x),\quad\forall x\geq0,\,\theta\in(0,1).
\end{equation}
\end{lemma}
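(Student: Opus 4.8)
The plan is to unwind Definition~\ref{def:superheavy} directly and match it to (\ref{eq:inv-NWU}) through a single change of variables, so that both directions of the equivalence fall out simultaneously. By definition, $X$ is super-heavy-tailed exactly when $g(x)=-\Log F_X(\tfrac1x)$ is subadditive on $(0,+\infty)$, i.e. $g(a+b)\leq g(a)+g(b)$ for all $a,b>0$; adopting the convention $-\Log 0=+\infty$, this stays meaningful even where $F_X$ vanishes. I would first rewrite this by moving the minus sign and exponentiating, using that $\exp$ is increasing, to obtain the equivalent multiplicative form
$$
F_X(\tfrac1a)\,F_X(\tfrac1b)\leq F_X(\tfrac{1}{a+b}),\qquad\forall a,b>0.
$$

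Next I would dispose of the boundary value $x=0$ in (\ref{eq:inv-NWU}): since $\tfrac{0}{\theta}=0$ and $F_X(0)=0$, both sides vanish there, so all the content lies in the range $x>0$. The key step is then to introduce the substitution
$$
x=\frac{1}{a+b},\qquad \theta=\frac{a}{a+b},
$$
which I claim is a bijection between $\{(a,b):a,b>0\}$ and $\{(x,\theta):x>0,\ \theta\in(0,1)\}$, with inverse $a=\tfrac{\theta}{x}$, $b=\tfrac{1-\theta}{x}$. Under this correspondence one checks directly that $\tfrac1a=\tfrac{x}{\theta}$, $\tfrac1b=\tfrac{x}{1-\theta}$ and $\tfrac{1}{a+b}=x$, so the displayed multiplicative inequality becomes precisely $F_X(\tfrac{x}{\theta})F_X(\tfrac{x}{1-\theta})\leq F_X(x)$, which is (\ref{eq:inv-NWU}). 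Because the substitution is a bijection, the two families of inequalities are logically equivalent, giving both implications at once.

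I do not expect a serious obstacle; the only care needed is bookkeeping. The main point to get right is matching the domain of subadditivity ($a,b>0$) with the admissible reciprocal arguments, and handling the regime where $F_X(\tfrac1x)=0$ and $\Log$ takes the value $-\infty$: there one must confirm, under the standard extended-real conventions, that the multiplicative inequality (\ref{eq:inv-NWU}) and its logarithmic counterpart remain equivalent, so that passing between them via $\exp$ and $\Log$ is legitimate throughout. Everything else is the routine verification of the three reciprocal identities under the change of variables.
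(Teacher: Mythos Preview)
Your argument is correct and is exactly the intended one: the paper does not spell out a proof, merely declaring the characterisation ``straightforward'', and your exponentiation of the subadditivity inequality followed by the bijective reparametrisation $(a,b)\leftrightarrow(x,\theta)$ is precisely the routine computation being alluded to. The care you take with the $x=0$ boundary and the $F_X=0$ case is appropriate and does not diverge from the paper's implicit reasoning.
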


We may now quote the stochastic dominance proved in \cite{ChenShneer2024}.
\begin{theorem}[adapted version of Theorem~1 in \cite{ChenShneer2024}]
\label{thn:ChenShneer}
The conclusion of Theorem~\ref{thm:main} holds if $X$ is super-heavy-tailed.
\end{theorem}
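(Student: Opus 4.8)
The plan is to show that every super-heavy-tailed random variable belongs to the class \ITS, after which the desired stochastic dominance is immediate from Theorem~\ref{thm:main}. Since this class inclusion is the whole content, the argument reduces to comparing the two characterising inequalities already available: the product form for super-heavy-tailed in Lemma~\ref{lem:inv-NWU} and the sum form for \ITS\ in Lemma~\ref{lem:ITS-carac}.

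Concretely, I would fix $x\geq0$ and $\theta\in(0,1)$ and abbreviate $a=F_X(\tfrac{x}{\theta})$, $b=F_X(\tfrac{x}{1-\theta})$ and $c=F_X(x)$, noting that all three lie in $[0,1]$ as values of a cumulative distribution function. By Lemma~\ref{lem:inv-NWU}, being super-heavy-tailed is exactly the product inequality $ab\leq c$. The goal is the sum inequality $a+b\leq c+1$ from Lemma~\ref{lem:ITS-carac}, which I would obtain from the elementary identity $a+b-c-1=-(1-a)(1-b)+(ab-c)$: the first term is nonpositive because $a,b\leq1$, and the second is nonpositive by the super-heavy-tailed hypothesis. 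Hence $a+b\leq c+1$, and since $x$ and $\theta$ were arbitrary, Lemma~\ref{lem:ITS-carac} shows that $X$ is \ITS.

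With the inclusion in hand, applying Theorem~\ref{thm:main} to the \ITS\ random variable $X$ yields (\ref{eq:main-st}), completing the proof. I do not expect a genuine obstacle here: the only delicate point is the passage from a product bound to a sum bound, which is entirely captured by the sign of $(1-a)(1-b)$ for distribution-function values in $[0,1]$. Since the ranges of $x$ and $\theta$ in the two lemmas coincide, no boundary cases need separate treatment.
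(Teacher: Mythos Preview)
Your proposal is correct. The paper itself quotes this theorem from \cite{ChenShneer2024} without proof, but the remark following Theorem~\ref{thm:inv-nwu-fsub} notes that the inclusion super-heavy-tailed $\subset$ \ITS\ together with Theorem~\ref{thm:main} recovers it---precisely your strategy.

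Your proof of the inclusion is algebraically equivalent to the paper's proof of Theorem~\ref{thm:inv-nwu-fsub} but more direct. The paper passes through the logarithmic characterisation (super-heavy-tailed means $\Log F_X(\tfrac1x)$ is superadditive), then exponentiates and invokes the inequality $e^{u+v}=e^ue^v\geq e^u+e^v-1$ for $u,v\leq0$, which the authors phrase as ``the exponential is superadditive.'' Unwinding this, with $a=e^u$ and $b=e^v$, it is exactly the statement $(1-a)(1-b)\geq0$ for $a,b\in[0,1]$ that you isolate. Your version avoids the detour through logarithms and works straight from the product and sum characterisations of Lemmas~\ref{lem:inv-NWU} and~\ref{lem:ITS-carac}, which is a little cleaner.
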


We now discuss the relation between the super-heavy-tailed class and the \ITS\ we introduced.
\begin{theorem}
\label{thm:inv-nwu-fsub}
If $X$ is super-heavy-tailed, then $X$ is \ITS.
\end{theorem}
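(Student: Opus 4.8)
The plan is to work entirely at the level of the two algebraic characterisations already available, namely Lemma~\ref{lem:inv-NWU} for the super-heavy-tailed hypothesis and Lemma~\ref{lem:ITS-carac} for the \ITS\ conclusion. This reduces the whole statement to a purely pointwise implication: for each fixed $x\geq0$ and $\theta\in(0,1)$, I must deduce the additive inequality (\ref{eq:sub-1x}) from the multiplicative inequality (\ref{eq:inv-NWU}).

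First I would abbreviate $a=F_X(\tfrac{x}{\theta})$ and $b=F_X(\tfrac{x}{1-\theta})$. Since these are values of a cumulative distribution function, both lie in $[0,1]$, so the product of their complements is nonnegative, $(1-a)(1-b)\geq0$. Expanding this yields the elementary bound $a+b\leq 1+ab$, which is the bridge between the additive and multiplicative worlds.

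Then I would invoke the super-heavy-tailed hypothesis in the form $ab=F_X(\tfrac{x}{\theta})F_X(\tfrac{x}{1-\theta})\leq F_X(x)$ provided by Lemma~\ref{lem:inv-NWU}, and chain the two estimates: $a+b\leq 1+ab\leq 1+F_X(x)$. This is exactly inequality (\ref{eq:sub-1x}), so Lemma~\ref{lem:ITS-carac} immediately gives that $X$ is \ITS, completing the argument.

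I do not expect a genuine obstacle here; the entire content is the inequality $a+b\leq 1+ab$ valid for $a,b\in[0,1]$, which converts the product inequality into the sum inequality. The only point deserving a word of care is that all quantities are genuinely confined to $[0,1]$, so that $(1-a)(1-b)\geq0$ really holds; this is automatic because $F_X$ is a distribution function.
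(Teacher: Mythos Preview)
Your proof is correct and is essentially the same argument as the paper's. Both hinge on the elementary identity $(1-a)(1-b)\geq0$ for $a,b\in[0,1]$, which turns the multiplicative super-heavy-tailed inequality into the additive \ITS\ one; the paper phrases this step as ``the exponential is superadditive'' and works directly from the subadditivity definitions in the variables $x,y$, while you work through the $\theta$-parametrised Lemmas~\ref{lem:ITS-carac} and~\ref{lem:inv-NWU}, but the content is identical.
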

\begin{proof}{Proof.}
As noted after Definition~\ref{def:superheavy}, $X$ being super-heavy-tailed means that $\Log(1-F_{\frac1X}^-(x))=\Log F_X(\frac1x)$ is superadditive, that is $\Log F_X(\frac{1}{x+y})\geq\Log F_X(\frac1x)+\Log F_X(\frac1y)$, which implies that $$
F_X\left(\tfrac{1}{x+y}\right)\geq\exp\left(\Log F_X\left(\tfrac1x\right)+\Log F_X\left(\tfrac1y\right)\right)
  \geq F_X\left(\tfrac1x\right)+F_X\left(\tfrac1y\right)-1,
$$
as the exponential is superadditive. Finally, going to the complementary sets, this last inequality is just $\Fbar_X(\frac{1}{x+y})\leq\Fbar_X(\frac1x)+\Fbar_X(\frac1y)$, that is, $\Fbar_X(\frac1x)$ is subadditive, that is, $X$ is \ITS. \Halmos
\end{proof}

Of course, this result implies that Theorem~1 in \cite{ChenShneer2024} is contained in Theorem~\ref{thm:main}. Moreover, the following example shows that the \ITS\ class is strictly larger than the super-heavy-tailed family.

\begin{example}
\label{ex:strange}
Consider a random variable $R_a$ with cumulative distribution functions of the form $a^{-1/x}\left(1+\frac1{e^{2x}-1}\right)$. Choosing $a\in(0,1)$ sufficiently small (say, for example, $a=0.5$), it can be verified that (\ref{eq:sub-1x}) is satisfied, by checking the assumption in Proposition~\ref{prop:ITS-dens}, so $R_a$ is \ITS. Differently, (\ref{eq:inv-NWU}) is not, so $R_a$ is not super-heavy-tailed. 
\end{example}

\subsection{The super-Cauchy class}
\label{subsec:superCauchy}
In \cite{Muller2024}, in a similar way as to the ``\textit{super-something}'' classes already discussed, the following class is considered.
\begin{definition}
\label{def:superCauchy}
A random variable $X$ is said super-Cauchy if $\mathcal{C}\leq_c F_X$.
\end{definition}
Note that, unlike the previously defined classes, this is the only one that allows the support to be the whole $\mathbb{R}$. 
Therefore, no inclusion relationship is to be expected between the super-Cauchy and our \ITS\ class, unless, or course, we introduce some boundedness assumption about the support. %restrict ourselves to nonnegative random variables.

The following result is proved in \cite{Muller2024}.
\begin{theorem}[Theorem~2.10 in \cite{Muller2024}]
\label{thm:muller}
$X$ is super-Pareto $\quad\Rightarrow\quad$ $X$ is super-Fr\'{e}chet $\quad\Rightarrow\quad$ $X$ is super-Cauchy. Moreover, the conclusion of Theorem~\ref{thm:main} holds if $X$ is super-Cauchy.
\end{theorem}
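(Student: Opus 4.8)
The plan is to treat the two assertions separately: the chain of implications is a purely order-theoretic statement about the convex transform order $\leqc$, while the stochastic dominance for the super-Cauchy class rests on a stability property of the Cauchy law combined with Jensen's inequality.

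For the implications, I would first recast each ``super-$\star$'' property in the convex transform order, arguing exactly as in Proposition~\ref{prop:conv} and Definition~\ref{def:superCauchy}: a random variable $X$ is super-Fr\'echet precisely when $\mathcal{H}\leqc F_X$ and super-Cauchy precisely when $\mathcal{C}\leqc F_X$ (indeed $F_X^{-1}\circ\mathcal{C}=h$ is convex if and only if $X\stackrel{d}{=}h(T)$ with $T$ Cauchy and $h$ increasing convex). By transitivity of $\leqc$, the two implications then reduce to the two base comparisons $\mathcal{H}\leqc\mathcal{P}$ and $\mathcal{C}\leqc\mathcal{H}$: if $\mathcal{P}\leqc F_X$ then $\mathcal{H}\leqc\mathcal{P}\leqc F_X$, and if $\mathcal{H}\leqc F_X$ then $\mathcal{C}\leqc\mathcal{H}\leqc F_X$. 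The first comparison is already recorded in the text, since $\mathcal{P}^{-1}\circ\mathcal{H}=\Lambda_{\mathcal{H}}+1$ and $\Lambda_{\mathcal{H}}$ was seen to be convex in Example~\ref{rem:frechet}. It therefore remains only to establish $\mathcal{C}\leqc\mathcal{H}$, that is, the convexity of $\mathcal{H}^{-1}\circ\mathcal{C}$ on $\mathbb{R}$, where $\mathcal{H}^{-1}(u)=-1/\Log u$.

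For the stochastic dominance, I would exploit the fact that the standard Cauchy law is $1$-stable: if $T_1,\ldots,T_n$ are i.i.d.\ Cauchy and $\theta_1,\ldots,\theta_n\geq0$ with $\theta_1+\cdots+\theta_n=1$, then $\theta_1 T_1+\cdots+\theta_n T_n\stackrel{d}{=}T$. Writing a super-Cauchy $X$ as $X\stackrel{d}{=}h(T)$ with $h$ increasing convex, and setting $X_i=h(T_i)$ (which are i.i.d.\ copies of $X$), Jensen's inequality applied pointwise yields $h(\theta_1 T_1+\cdots+\theta_n T_n)\leq\theta_1 h(T_1)+\cdots+\theta_n h(T_n)=\theta_1 X_1+\cdots+\theta_n X_n$ almost surely, while the left-hand side has the distribution of $h(T)$, hence of $X$, by stability. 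Since an almost sure inequality between two random variables forces $\leqst$ between their laws (by Definition~\ref{def:order1}), we obtain $X\leqst\theta_1 X_1+\cdots+\theta_n X_n$, which is exactly (\ref{eq:main-st}).

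The genuinely computational step, and the one I expect to be the main obstacle, is the base comparison $\mathcal{C}\leqc\mathcal{H}$. One clean route is the density-ratio criterion for $\leqc$, namely that $\mathcal{C}\leqc\mathcal{H}$ holds if and only if $t\mapsto \mathcal{C}'(\mathcal{C}^{-1}(t))/\mathcal{H}'(\mathcal{H}^{-1}(t))$ is nondecreasing on $(0,1)$; equivalently, one checks directly that $\psi(x)=-1/\Log(\tfrac1\pi\arctan x+\tfrac12)$ has nondecreasing derivative. Both the Cauchy and the Fr\'echet distribution have regularly varying, index $-1$ tails, so this comparison is a statement about shape rather than about tail weight, and carrying out the monotonicity check, including the delicate behaviour near the endpoints $t\to0^+$ and $t\to1^-$, is where the real work lies. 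By contrast, once $\mathcal{C}\leqc\mathcal{H}$ is in hand, both implications and the dominance argument follow immediately.
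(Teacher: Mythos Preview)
The paper does not actually prove this theorem: it is quoted as Theorem~2.10 of \cite{Muller2024} and stated without argument, so there is no ``paper's own proof'' to compare against. That said, your outline is sound and is essentially the argument one finds in \cite{Muller2024}: the implications follow by transitivity of $\leqc$ once the base comparisons $\mathcal{H}\leqc\mathcal{P}$ and $\mathcal{C}\leqc\mathcal{H}$ are established, and the dominance for the super-Cauchy class follows from the $1$-stability of the Cauchy law (so that $\theta_1T_1+\cdots+\theta_nT_n\stackrel{d}{=}T$) combined with the pointwise Jensen inequality $h(\sum_i\theta_iT_i)\leq\sum_i\theta_i h(T_i)$, which yields an almost-sure lower bound distributed as $X$ for the convex combination $\sum_i\theta_iX_i$.

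Your identification of the only nontrivial step is accurate: the convexity of $\psi(x)=\mathcal{H}^{-1}\circ\mathcal{C}(x)=-1/\Log\!\bigl(\tfrac1\pi\arctan x+\tfrac12\bigr)$ on $\mathbb{R}$ is the genuine computation, and your proposed density-ratio criterion is a reasonable way to carry it out. Everything else is immediate.
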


Theorem~2.14 in \cite{Muller2024} shows that, if $T$ has cumulative distribution function $\mathcal{C}$, then $\vert T\vert$ is super-Cauchy while $T$ is not super-heavy-tailed. Nevertheless, $\vert T\vert$ can easily be seen to be \ITS. Moreover, to show that the super-Cauchy class does not contain the \ITS\ one, we provide the following example.

\begin{example}
\label{ex:non-superCauchy}
Take the cumulative distribution function $V(x)=e^{-1/\sqrt{x}}\left(1+e^{2x-1}\right)$, for $x>0$. It can be seen that $1-V(\frac1x)$ is anti-star-shaped, so $V$ is \ITS. Moreover, it is easily verified that $\mathcal{C}^{-1}\circ V$ is not concave nor convex, so this distribution is not super-Cauchy. 
\end{example}

\section{Characterisations based on a new inverted-subadditive order}

Many relevant nonparametric families of distributions, such as the IHR, DHR, NBU, and NWU classes, discussed in the book of \cite{marshall2007}, and many others, can be defined via a suitable stochastic order with respect to some reference distribution. This is also the case, for example, of the super-Pareto, super-Fr\'{e}chet, and super-Cauchy families, which are defined through the convex transform order, and using the aforementioned distributions as reference points. Such types of characterisations are very useful, since they allow to determine a benchmark distribution and to establish relations beween classes using transitivity and other properties of stochastic orders. We now show that also the super-heavy tailed and the \ITS\ classes can also be defined again using the Fr\'{e}chet and the Pareto distributions, respectively, as a benchmark. Nevertheless, in this case the order is of different nature, and it is introduced in the following definition.
\begin{definition}
\label{def:InvSub-order}
Given two cumulative distribution functions $F_1$ and $F_2$ we say that $F_1$ is smaller than $F_2$ in inverted-subadditive order, represented by $F_1\leq_{i-sb}F_2$, if $\frac{1}{F_2^{-1}(F_1^-(\frac1x))}$ is subadditive.
\end{definition}
Note that the inverted-subadditive order is related to inverted distributions, taking into account that, in the continuous case, the quantile function of the inverted distribution of $F_2$ is $\frac{1}{F_2^{-1}(1-p)}$. Moreover, differently from other transform orders, such as the convex one, this new order is not shift-invariant. In fact, shifts may be crucial in this case, since we are dealing with inverted distributions. For the same reason, it is also clear that the inverted-subadditive order rules out distributions whose supports strictly include the origin.

The following result gives a complete description of the super-heavy-tailed class, avoiding the strict continuity difficulties.
\begin{proposition}
\label{cor:invNWU}
A random variable $X$ is super-heavy-tailed if and only if $F_X\leq_{i-sb}\mathcal{H}$.
\end{proposition}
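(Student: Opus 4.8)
The plan is to rewrite both sides of the claimed equivalence as the \emph{same} product inequality, so that the whole statement collapses to a question about passing between $F_X$ and its left-continuous version $F_X^-$. First I would compute the relevant inverse: since $\mathcal{H}(x)=e^{-1/x}$ is continuous and strictly increasing on $(0,+\infty)$ with range $(0,1)$, its generalised inverse is $\mathcal{H}^{-1}(u)=\frac{1}{-\Log u}$, whence $\frac{1}{\mathcal{H}^{-1}(u)}=-\Log u$. Substituting $u=F_X^-(\tfrac1x)$, the function appearing in Definition~\ref{def:InvSub-order} becomes
\[
\frac{1}{\mathcal{H}^{-1}(F_X^-(\tfrac1x))}=-\Log F_X^-(\tfrac1x).
\]
Thus $F_X\leq_{i-sb}\mathcal{H}$ holds precisely when $-\Log F_X^-(\tfrac1x)$ is subadditive, and exponentiating (using that $\exp$ is increasing) shows this is equivalent to $F_X^-(\tfrac1s)\,F_X^-(\tfrac1t)\leq F_X^-(\tfrac1{s+t})$ for all $s,t>0$.

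Next I would record that, by Definition~\ref{def:superheavy}, $X$ is super-heavy-tailed exactly when $-\Log F_X(\tfrac1x)$ is subadditive, which by the identical exponentiation step is equivalent to $F_X(\tfrac1s)\,F_X(\tfrac1t)\leq F_X(\tfrac1{s+t})$ for all $s,t>0$ (this is the inequality of Lemma~\ref{lem:inv-NWU} under the correspondence $s=\theta/x$, $t=(1-\theta)/x$, so that $s+t=1/x$). At this point the entire proposition reduces to showing that these two product inequalities — one written with $F_X$, the other with $F_X^-$ — are equivalent.

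The decisive step, and the only genuinely nontrivial one, is this reconciliation of $F_X$ and $F_X^-$. I would set $H(s)=F_X(\tfrac1s)$ and $G(s)=F_X^-(\tfrac1s)$, both decreasing in $s>0$; by right-continuity of $F_X$ and left-continuity of $F_X^-$, the function $H$ is left-continuous and $G$ is right-continuous, and one checks directly that $G(s)=H(s^+)$ and $H(s)=G(s^-)$, so the two agree off an at most countable set. To pass from the $H$-inequality to the $G$-inequality, I would fix $s,t$ and let $s'\downarrow s$, $t'\downarrow t$ in $H(s')H(t')\leq H(s'+t')$; the left side tends to $G(s)G(t)$ and, as $s'+t'\downarrow s+t$, the right side tends to $G(s+t)$. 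Conversely, letting $s'\uparrow s$, $t'\uparrow t$ in $G(s')G(t')\leq G(s'+t')$ sends the left side to $H(s)H(t)$ and the right to $H(s+t)$. This monotone limiting argument is exactly where the right/left-continuous distinction is absorbed, and it is the main obstacle; the boundary arguments at which the relevant probabilities vanish (so that $-\Log$ is infinite) are handled separately, the product inequalities being trivially satisfied there. Chaining the three reductions then yields the stated equivalence.
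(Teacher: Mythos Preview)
Your approach is essentially the same as the paper's: compute $\mathcal{H}^{-1}(u)=-1/\Log u$, substitute, and read off that the function in Definition~\ref{def:InvSub-order} is $-\Log$ of the distribution function at $1/x$. The paper's proof is literally that one line and stops there, writing $F_X(\tfrac1x)$ rather than $F_X^-(\tfrac1x)$ and not commenting on the discrepancy between the $F_X^-$ appearing in Definition~\ref{def:InvSub-order} and the $F_X$ in Definition~\ref{def:superheavy}. You noticed this mismatch and supplied a careful monotone-limit argument to reconcile the two product inequalities; that step is correct (your identifications $H(s^+)=G(s)$ and $G(s^-)=H(s)$ are exactly what is needed, and the one-sided limits preserve the inequalities because both functions are monotone). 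So your proof is not merely correct but strictly more complete than the paper's on this point; the trade-off is that the paper treats the result as a tautology once the inverse is computed, while you spend most of your effort on a continuity issue the authors chose to elide.
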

\begin{proof}{Proof.}
Note that $\mathcal{H}^{-1}(p)=-\frac1{\Log p}$, so $\frac{1}{\mathcal{H}^{-1}(F_X(\frac1x))}=-\Log(F_X(\frac1x))$.
\Halmos
\end{proof}

We may obtain a similar characterisation for the \ITS\ class using the subadditive order and choosing the appropriate benchmark distribution. %The following is straightforward by computing the quantile function of the Pareto.

\begin{proposition}
\label{cor:fsub}
A random variable $X$ such that $F_X(0)=0$ is \ITS\ if and only if\linebreak $F_X\leq_{i-sb}\mathcal{P}$.
\end{proposition}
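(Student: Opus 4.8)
The plan is to unwind the definition of the inverted-subadditive order at the benchmark $\mathcal{P}$ and show it reduces exactly to the subadditivity of the inverted distribution, up to a one-sided-continuity adjustment. First I would recall, as already used in the proof of Proposition~\ref{prop:conv}, that the Pareto quantile is $\mathcal{P}^{-1}(u)=\frac{1}{1-u}$, so that $\frac{1}{\mathcal{P}^{-1}(u)}=1-u$. Substituting $u=F_X^-(\frac1x)$ yields
\[
\frac{1}{\mathcal{P}^{-1}(F_X^-(\tfrac1x))}=1-F_X^-(\tfrac1x),
\]
so that, by Definition~\ref{def:InvSub-order}, the relation $F_X\leq_{i-sb}\mathcal{P}$ is precisely the subadditivity of the map $x\mapsto 1-F_X^-(\frac1x)$.

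Next I would identify this function probabilistically. Since $X\geq0$, for $x>0$ we have $1-F_X^-(\frac1x)=\Prob(X\geq\frac1x)=\Prob(\frac1X\leq x)=F_{\frac1X}(x)$, the \emph{right}-continuous distribution function of $\frac1X$. On the other hand, by Definition~\ref{def-ITS-sub} the \ITS\ property is the subadditivity of $F_{\frac1X}^-(x)=1-F_X(\frac1x)$, the \emph{left}-continuous version of the very same distribution function. In other words, the order definition feeds $F_X^-$ into the benchmark quantile and lands on the right-continuous $F_{\frac1X}$, whereas \ITS\ is phrased through the left-continuous $F_{\frac1X}^-$; the content of the proposition is exactly that this discrepancy is immaterial. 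Thus the statement reduces to the claim that a monotone distribution function is subadditive if and only if its left-continuous modification is.

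This last equivalence is the only nontrivial point, and it is precisely the discrete-model subtlety that motivated introducing $F^-$ in the first place; in the continuous case the two functions coincide and there is nothing to prove. For the general case I would argue by approximation, exploiting that $F_{\frac1X}$ and $F_{\frac1X}^-$ agree off an at most countable jump set and share the same one-sided limits. Writing $G=F_{\frac1X}$ and assuming $G$ subadditive, for $x,y>0$ and small $\epsilon>0$ I would use $G(x+y-\epsilon)\leq G(x-\tfrac\epsilon2)+G(y-\tfrac\epsilon2)\leq F_{\frac1X}^-(x)+F_{\frac1X}^-(y)$ and let $\epsilon\downarrow0$ to obtain $F_{\frac1X}^-(x+y)\leq F_{\frac1X}^-(x)+F_{\frac1X}^-(y)$; the converse is symmetric, approaching from the right and using $\lim_{s\downarrow t}F_{\frac1X}^-(s)=F_{\frac1X}(t)$. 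I expect this monotone-continuity bookkeeping to be the main (indeed the only) obstacle, since everything else is the algebraic identity displayed above.
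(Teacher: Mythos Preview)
Your proposal is correct and follows the same one-line approach as the paper: compute $\mathcal{P}^{-1}(u)=\tfrac{1}{1-u}$, substitute, and identify the resulting function with the inverted distribution. The paper's proof stops there, writing simply that one is ``reduced to verifying the subadditivity of $1-F_X(\tfrac1x)$''; in particular, it silently replaces the $F_X^-$ that Definition~\ref{def:InvSub-order} prescribes by $F_X$ (the same shortcut appears in the proof of Proposition~\ref{cor:invNWU}). You correctly spot that, read literally, the order yields subadditivity of the right-continuous $F_{1/X}$ while \ITS\ is stated for the left-continuous $F_{1/X}^-$, and your monotone-approximation argument bridging the two versions is valid in both directions. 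So your proof is the same as the paper's in spirit, but strictly more careful on the one point the paper glosses over.
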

\begin{proof}{Proof.}
As $\mathcal{P}^{-1}(p)=\frac{1}{1-p}$, we are reduced to verifying the subadditivity of $1-F_X(\frac1x)$. \Halmos
\end{proof}

Note that, by transitivity of the inverted-subadditive order, Propositions~\ref{cor:invNWU} and \ref{cor:fsub}, together with $\mathcal{H}\leq_{i-sb}\mathcal{P}$, provide an alternative proof of Theorem~\ref{thm:inv-nwu-fsub}.

It is interesting to note that choosing the Fr\'{e}chet as a benchmark, compared to the Pareto, yields a larger class when we use the convex transform order, and a smaller one if we use the inverted-subadditive order, due to the different nature of these orders.

\section*{Acknowledgement}
T.L. was supported by the Italian funds ex MURST 60\% 2022. I.A. and P.E.O. were partially supported by the Centre for Mathematics of the University of Coimbra UID/MAT/00324/2020, funded by the Portuguese Government through FCT/MCTES and co-funded by the European Regional Development Fund through the Partnership Agreement PT2020.
P.E.O. expresses his gratitude to the University of Central Lancashire, Cyprus, for receiving him during a large period of development of this work.

\bibliographystyle{elsarticle-harv} % outcomment this and next line in Case 1
\bibliography{biblio-peo} % if more than one, comma separated

\begin{thebibliography}{8}
\expandafter\ifx\csname natexlab\endcsname\relax\def\natexlab#1{#1}\fi
\providecommand{\url}[1]{\texttt{#1}}
\providecommand{\href}[2]{#2}
\providecommand{\path}[1]{#1}
\providecommand{\DOIprefix}{doi:}
\providecommand{\ArXivprefix}{arXiv:}
\providecommand{\URLprefix}{URL: }
\providecommand{\Pubmedprefix}{pmid:}
\providecommand{\doi}[1]{\href{http://dx.doi.org/#1}{\path{#1}}}
\providecommand{\Pubmed}[1]{\href{pmid:#1}{\path{#1}}}
\providecommand{\bibinfo}[2]{#2}
\ifx\xfnm\relax \def\xfnm[#1]{\unskip,\space#1}\fi
%Type = Techreport
\bibitem[{Arab et~al.(2024)Arab, Lando and Oliveira}]{ineq-order2024}
\bibinfo{author}{Arab, I.}, \bibinfo{author}{Lando, T.},
  \bibinfo{author}{Oliveira, P.E.}, \bibinfo{year}{2024}.
\newblock \bibinfo{title}{Inequalities and bounds for expected order statistics
  from transform-ordered families}.
\newblock \bibinfo{type}{arXiv} \bibinfo{number}{2403.03802 [stat.ME]}.
\newblock \DOIprefix\doi{10.48550/arXiv.2403.03802}.
%Type = Article
\bibitem[{Chen et~al.(2024)Chen, Embrechts and Wang}]{superPareto2024}
\bibinfo{author}{Chen, Y.}, \bibinfo{author}{Embrechts, P.},
  \bibinfo{author}{Wang, R.}, \bibinfo{year}{2024}.
\newblock \bibinfo{title}{Technical note—an unexpected stochastic dominance:
  Pareto distributions, dependence, and diversification}.
\newblock \bibinfo{journal}{Oper. Res.} \bibinfo{volume}{Articles in Advance}.
\newblock \DOIprefix\doi{10.1287/opre.2022.0505}.
%Type = Misc
\bibitem[{Chen and Shneer(2024)}]{ChenShneer2024}
\bibinfo{author}{Chen, Y.}, \bibinfo{author}{Shneer, S.}, \bibinfo{year}{2024}.
\newblock \bibinfo{title}{Stochastic dominance for super heavy-tailed random
  variables}.
\newblock \DOIprefix\doi{10.48550/arXiv.2408.15033},
  \href{http://arxiv.org/abs/2408.15033}{{\tt arXiv:2408.15033}}.
%Type = Article
\bibitem[{Lando et~al.(2023)Lando, Arab and Oliveira}]{lando2023nonparametric}
\bibinfo{author}{Lando, T.}, \bibinfo{author}{Arab, I.},
  \bibinfo{author}{Oliveira, P.E.}, \bibinfo{year}{2023}.
\newblock \bibinfo{title}{Nonparametric inference about increasing odds rate
  distributions}.
\newblock \bibinfo{journal}{J. Nonparametr. Stat.} \bibinfo{volume}{36},
  \bibinfo{pages}{435--454}.
\newblock \DOIprefix\doi{10.1080/10485252.2023.2220050}.
%Type = Book
\bibitem[{Marshall and Olkin(2007)}]{marshall2007}
\bibinfo{author}{Marshall, A.W.}, \bibinfo{author}{Olkin, I.},
  \bibinfo{year}{2007}.
\newblock \bibinfo{title}{Life distributions}.
\newblock Springer Series in Statistics, \bibinfo{publisher}{Springer, New
  York}.
\newblock \DOIprefix\doi{10.1007/978-0-387-68477-2}.
%Type = Misc
\bibitem[{Müller(2024)}]{Muller2024}
\bibinfo{author}{Müller, A.}, \bibinfo{year}{2024}.
\newblock \bibinfo{title}{Some remarks on the effect of risk sharing and
  diversification for infinite mean risks}.
\newblock \DOIprefix\doi{10.48550/arXiv.2411.10139},
  \href{http://arxiv.org/abs/2411.10139}{{\tt arXiv:2411.10139}}.
%Type = Article
\bibitem[{Pomatto et~al.(2020)Pomatto, Strack and Tamuz}]{pomatto}
\bibinfo{author}{Pomatto, L.}, \bibinfo{author}{Strack, P.},
  \bibinfo{author}{Tamuz, O.}, \bibinfo{year}{2020}.
\newblock \bibinfo{title}{Stochastic dominance under independent noise}.
\newblock \bibinfo{journal}{Journal of Political Economy}
  \bibinfo{volume}{128}, \bibinfo{pages}{1877--1900}.
\newblock \DOIprefix\doi{10.1086/705555}.
%Type = Book
\bibitem[{Shaked and Shanthikumar(2007)}]{shaked2007}
\bibinfo{author}{Shaked, M.}, \bibinfo{author}{Shanthikumar, J.G.},
  \bibinfo{year}{2007}.
\newblock \bibinfo{title}{Stochastic orders}.
\newblock Springer Series in Statistics, \bibinfo{publisher}{Springer, New
  York}.
\newblock \DOIprefix\doi{10.1007/978-0-387-34675-5}.

\end{thebibliography}

%%%%%%%%%%%%%%%%%
\end{document}